\newcommand{\eee}{{\rm e}}
\DeclareMathOperator{\1}{\mathbbm{1}}
\newcommand{\mmp}{\mathbb{P}}
\newcommand{\dod}{\overset{{\rm d}}{\to}}
\newcommand{\me}{\mathbb{E}}
\newcommand{\mr}{\mathbb{R}}
\newcommand{\mn}{\mathbb{N}}
\newtheorem{thm}{Theorem}[section]
\newtheorem{lemma}[thm]{Lemma}
\newtheorem{cor}[thm]{Corollary}
\newtheorem{assertion}[thm]{Proposition}
\theoremstyle{definition}
\theoremstyle{remark}
\begin{document}
\title{A law of the iterated logarithm for the number of blocks in regenerative compositions generated by gamma-like subordinators}
\date{\today}

\author{Alexander Iksanov\footnote{ Faculty of Computer Science and Cybernetics, Taras Shevchenko National University of Kyiv, 01033 Kyiv, Ukraine, e-mail:
iksan@univ.kiev.ua} ~~~and~~ Wissem Jedidi \footnote{Department of
Statistics  \& OR, King Saud University, P.O. Box 2455, Riyadh
11451, Saudi Arabia and Universit\'e de Tunis El Manar, Facult\'e
des Sciences de Tunis, LR11ES11 Laboratoire d'Analyse
Math\'ematiques et Applications, 2092, Tunis, Tunisia, e-mail:
wissem$_-$jedidi@yahoo.fr} }

\maketitle

\begin{abstract}
\noindent
The points of the closed range of a drift-free subordinator with no killing are used for separating into blocks the elements of a sample of size $n$ from the standard exponential distribution. This gives rise to a random composition of $n$. Assuming that the subordinator has the L\'{e}vy measure, which behaves near zero like the gamma subordinator, we prove a law of the iterated logarithm for the number of blocks in the composition as $n$ tends to infinity. Along the way we prove a law of the iterated logarithm for the Lebesgue convolution of a standard Brownian motion and a deterministic regularly varying function. This result may be of independent interest.
\end{abstract}
\noindent Keywords: composition, inverse subordinator, law of the iterated logarithm, number of blocks, subordinator

\noindent 2010 Mathematics Subject Classification: 60F15, 60C05

\section{Introduction and main result}\label{sect:intro}

Let $S:=(S(t))_{t\geq 0}$ be a subordinator (an increasing L{\'e}vy process) with $S(0)=0$, zero drift, no killing and a nonzero L{\'e}vy measure $\nu$. 
Let $E_1,E_2,\ldots$ 
be independent random variables with the exponential distribution of unit mean, which are independent of $S$. The closed range of $S$ has zero Lebesgue measure and splits the positive halfline into infinitely many disjoint intervals that we call gaps. Assuming that there are $n$ variables $E_1,\ldots, E_n$, we call a gap occupied if it contains at least one $E_j$, $j=1,2,\ldots, n$. The sequence of positive occupancy numbers of the gaps, written in the left-to-right order,
is a composition $\mathcal{C}_n$ of integer $n$. The sequence $(\mathcal{C}_n)_{n\geq 1}$ forms a regenerative composition structure as introduced and discussed in \cite{Gnedin+Pitman:2005}. Denote by $K_n$ the number of occupied gaps. We call $K_n$ the number of blocks of the regenerative composition of $n$. Observe that the variables $K_1$, $K_2,\ldots$ being functions of $S$ and $E_1,E_2,\ldots$ live on a common probability space. Thus, investigating their almost sure (a.s.) asymptotic behavior, particularly proving a law of the iterated logarithm (LIL), makes sense.

Assume that $S$ is a compound Poisson process. This is equivalent to finiteness of the L\'{e}vy measure $\nu$, that is, $\nu((0,\infty))<\infty$. Under this assumption, the following LIL was proved in our earlier paper \cite{Iksanov+Jedidi+Bouzzefour:2017}. For a family $(x_t)$ of real numbers denote by $C((x_t))$ the set of its limit points.
\begin{assertion}\label{main}
Let $\xi$ be a random variable having the same distribution as jumps of a compound Poisson process $S$. Assume that ${\tt s}^2:={\rm Var}\,[\xi]\in (0,\infty)$ and that, for some $a>0$, $\me
[|\log(1-\eee^{-\xi})|^a]<\infty$. Then
$$C\bigg(\bigg(\frac{K_{\lfloor \eee^n\rfloor}-{\tt m}^{-1}\int_0^n\mmp\{|\log(1-\eee^{-\xi})|\leq x\}{\rm d}x}{(2{\tt s}^2{\tt m}^{-3}n\log\log
n)^{1/2}}:n\geq 3\bigg)\bigg)=[-1,1]\quad\text{{\rm a.s.}}$$ In
particular,
$${\lim\sup\,(\lim
\inf)}_{n\to\infty}\frac{K_{\lfloor \eee^n\rfloor}-{\tt m}^{-1}\int_0^n\mmp\{|\log(1-\eee^{-\xi})|\leq
x\}{\rm d}x}{(2n\log\log n)^{1/2}}=+(-){\tt s}{\tt m}^{-3/2}\quad\text{{\rm a.s.}}$$
\end{assertion}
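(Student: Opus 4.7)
The compound Poisson assumption means $S(t)=\sum_{i=1}^{\Pi(t)}\xi_i$ for a Poisson process $\Pi$ of rate $\nu((0,\infty))$, so the closed range of $S$ is exactly the random walk $\{S_k=\xi_1+\cdots+\xi_k:k\geq 0\}$, the $k$-th gap has length $\xi_k$, and conditionally on $S$ every sample point lies in gap $k$ with probability $P_k:=\eee^{-S_{k-1}}(1-\eee^{-\xi_k})$. Writing $\zeta_k:=-\log(1-\eee^{-\xi_k})$, one has $\log P_k=-S_{k-1}-\zeta_k$, and $\zeta_1,\zeta_2,\ldots$ are iid copies of the variable $\zeta$ whose $a$-th moment is assumed finite.

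My strategy is Poissonization of the sample size. Let $N=(N(t))_{t\geq 0}$ be a unit-rate Poisson process independent of everything else and set $\tilde K(t):=K_{N(t)}$. Conditionally on $S$, the numbers of Poisson points in distinct gaps are independent Poissons with means $tP_k$, so $\tilde K(t)=\sum_{k\geq 1}B_k(t)$ is a sum of conditionally independent Bernoullis with success probabilities $1-\exp(-tP_k)$, a much more tractable object than $K_{\lfloor\eee^n\rfloor}$.

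Since $\eee^nP_k=\exp(n-S_{k-1}-\zeta_k)$, the Bernoulli $B_k(\eee^n)$ is close to the indicator $\1\{S_{k-1}+\zeta_k\leq n\}$, the discrepancy concentrating on a bounded transition zone around level $n$. I would then establish
$$\tilde K(\eee^n)=\hat\rho(n)+o\bigl((n\log\log n)^{1/2}\bigr)\ \text{a.s.},\qquad \hat\rho(n):=\#\{k\geq 1:S_{k-1}+\zeta_k\leq n\},$$
with the error absorbed by the $O(\log n)$ indices sitting inside the transition zone, via renewal theory and the moment hypothesis on $\zeta$. Conditioning on $\zeta_k$ and invoking the elementary renewal theorem gives
$$\me[\hat\rho(n)]=\int_0^n\mmp(\zeta\leq n-s)\,dU(s)\sim\frac{1}{{\tt m}}\int_0^n\mmp(\zeta\leq x)\,dx,$$
where $U$ is the renewal function of $(S_k)$, matching the theorem's centering exactly (after checking via the identity $\int_0^n\mmp(\zeta\leq x)\,dx=\me[(n-\zeta)_+\1\{\zeta\leq n\}]$). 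Fluctuations of $\hat\rho(n)$ around its mean are driven by those of the underlying random walk, since the additive $\zeta$-perturbation shifts the crossing level of $(S_{k-1})$ through $n$ by a stochastic amount which one shows, via Borel--Cantelli using $\me|\zeta|^a<\infty$, is $o((n\log\log n)^{1/2})$ a.s.; Strassen's LIL for the ordinary renewal count $\rho(n):=\#\{k\geq 0:S_k\leq n\}$ then transfers directly and supplies the constant ${\tt s}{\tt m}^{-3/2}$.

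The final step is de-Poissonization from $\tilde K(\eee^n)$ to $K_{\lfloor\eee^n\rfloor}$. The naive monotonicity bound $|K_m-K_{m'}|\leq|m-m'|$ is useless since $|N(\eee^n)-\eee^n|$ is of order $\eee^{n/2}$, which dwarfs the LIL scale. The correct argument exploits that each of the $O(\eee^{n/2}\sqrt{\log\log n})$ ``extra'' or ``missing'' sample points near $\eee^n$ lands, with overwhelming conditional probability, in a gap that is already occupied (the fraction of still-empty gaps up to the transition zone being vanishingly small), so the actual change in $K$ is of smaller order than $\sqrt{n\log\log n}$. I expect this de-Poissonization, combined with the identification of the exact centering in the previous step, to be the main technical obstacle; the weak moment $\me[|\log(1-\eee^{-\xi})|^a]<\infty$ is tight precisely for controlling the $\zeta$-perturbation below the LIL rate.
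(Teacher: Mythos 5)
Your overall route is the one the paper itself points to: Proposition \ref{main} is not reproved here but quoted from \cite{Iksanov+Jedidi+Bouzzefour:2017}, and the approximation you build everything on --- replacing the number of blocks by the perturbed-random-walk count $\hat\rho(n)=\#\{k\geq 1: S_{k-1}+\zeta_k\leq n\}$ with $\zeta_k=|\log(1-\eee^{-\xi_k})|$, i.e.\ the number of gaps with hitting probability at least $\eee^{-n}$ --- is exactly the approximation described in Section \ref{sect:intro} as the basis of that proof. Your Poissonization/de-Poissonization wrapper and the ``transition zone'' control of the Bernoulli-versus-indicator discrepancy are a reasonable variant of the same idea, so in spirit you are on the cited path.

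There is, however, a genuine gap in the step where you transfer the LIL from the pure renewal count $\rho(n)=\#\{k\geq 1:S_{k-1}\leq n\}$ to $\hat\rho(n)$. You assert that the $\zeta$-perturbation shifts the count by an amount that is a.s.\ $o((n\log\log n)^{1/2})$, so that ``Strassen's LIL for $\rho(n)$ transfers directly''. This is false in general under the standing hypothesis, which only gives $\me[\zeta^a]<\infty$ for some possibly small $a>0$: the difference $\rho(n)-\hat\rho(n)=\#\{k: S_{k-1}\leq n<S_{k-1}+\zeta_k\}$ has mean of order ${\tt m}^{-1}\int_0^n\mmp\{\zeta>x\}{\rm d}x$, which for a tail $\mmp\{\zeta>x\}\asymp x^{-a}$ with $a<1/2$ grows like $n^{1-a}\gg(n\log\log n)^{1/2}$. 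This systematic part is precisely why the centering in the statement is ${\tt m}^{-1}\int_0^n\mmp\{\zeta\leq x\}{\rm d}x$ rather than ${\tt m}^{-1}n$; had your claim been true, the centering ${\tt m}^{-1}n$ would work, contradicting the theorem. What actually has to be proved is that $\rho(n)-\hat\rho(n)-{\tt m}^{-1}\int_0^n\mmp\{\zeta>x\}{\rm d}x=o((n\log\log n)^{1/2})$ a.s., and this does not follow from a Borel--Cantelli bound on the $\zeta_k$ alone: one must (i) compare the conditional mean $\sum_{k\geq1}\1_{\{S_{k-1}\leq n\}}\mmp\{\zeta>n-S_{k-1}\}$ with ${\tt m}^{-1}\int_0^n\mmp\{\zeta>x\}{\rm d}x$ using the LIL for the walk together with a splitting of the integration range near $n$, and (ii) control the fluctuation of the indicator sum around this conditional mean (truncation plus Borel--Cantelli along subsequences). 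That is the technical heart of the cited proof. By contrast, the de-Poissonization you single out as the main obstacle is routine: $n\mapsto K_n$ is a.s.\ nondecreasing, so $K_{\lfloor\eee^n\rfloor}$ is sandwiched between Poissonized counts at times $\eee^n(1\pm\varepsilon_n)$ with $\varepsilon_n=O(\eee^{-n/2}\sqrt{n})$, and since both the centering and the normalization are functions of the logarithm of the sample size they change only negligibly under such a factor.
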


Put $$\Phi(t):=\int_{(0,\infty)} (1-\exp\{-t(1-\eee^{-x})\})\nu({\rm  d}x),\quad t>0$$ and assume that
\begin{equation}\label{eq:regular}
\varphi(t):=\Phi(\eee^t)~\sim~ t^\beta \ell(t),\quad t \to \infty
\end{equation}
for some $\beta>0$ and some $\ell$ slowly varying at $\infty$. In particular, this implies that the L\'{e}vy measure $\nu$ is infinite, that is, we are beyond the compound Poisson case.

To date, there are several methods for proving a central limit theorem (CLT) for $K_n$ under \eqref{eq:regular}, see \cite{Barbour+Gnedin:2006, Gnedin+Iksanov:2012, GnePitYor1}. Now we cite a fragment of Theorem 3.1(a) in \cite{Gnedin+Iksanov:2012}. Introduce the notation $$\sigma^2 :={\rm Var}[S(1)]=\int_{(0,\infty)}x^2 \nu({\rm d}x),\quad \mu:=\me [S(1)]=\int_{(0,\infty)} x\nu({\rm d}x).$$
\begin{assertion}\label{prop:clt}
Suppose \eqref{eq:regular} and $\sigma^2\in (0,\infty)$. Then $$\frac{K_n-\mu^{-1}\int_1^n x^{-1}\Phi(x){\rm d}x}{(\sigma^2\mu^{-3}\log n)^{1/2}\Phi(n)}~\dod~\beta\int_0^1 B(1-x)x^{\beta-1}{\rm d}x,$$ where $(B(u))_{u\geq 0}$ is a standard Brownian motion.
\end{assertion}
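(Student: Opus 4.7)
My strategy is Poissonization combined with a functional invariance principle for the inverse of $S$.

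\textit{Poissonization.} Replace the deterministic sample size by an independent Poisson$(n)$ variable $\pi_n$. Conditionally on $S$, the occupied-gap indicators are independent Bernoullis with success probability $1-\exp\{-n(\eee^{-S(t-)}-\eee^{-S(t)})\}$. A standard de-Poissonization bound controls $|K_n-K_{\pi_n}|$ at a scale smaller than $(\log n)^{1/2}\Phi(n)$, and a direct variance computation gives $\mathrm{Var}(K_{\pi_n}\mid S)\leq \me[K_{\pi_n}\mid S]$, which is also of smaller order than the target scale $\Phi(n)^2\log n$. It therefore suffices to prove the CLT for the $S$-measurable quantity $\me[K_{\pi_n}\mid S]-\me K_{\pi_n}$.

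\textit{Integral representation.} By the L\'{e}vy--It\^{o} formula,
$$\me[K_{\pi_n}\mid S]-\me K_{\pi_n}=\int_0^\infty\!\!\int_{(0,\infty)}\bigl(1-\eee^{-n\eee^{-S(t-)}(1-\eee^{-x})}\bigr)\,\widetilde{\mathcal N}(dt,dx),$$
where $\widetilde{\mathcal N}$ is the compensated Poisson random measure of the jumps of $S$. Averaging the $x$-variable against $\nu$ produces the function $y\mapsto \Phi(n\eee^{-y})$ evaluated at the height $y=S(t-)$, which is localized on $y\in[0,\log n]$. Switching from the time variable $t$ to the spatial variable $y$ via the inverse subordinator $L(y):=\inf\{t:S(t)>y\}$, one rewrites the centered quantity (up to a negligible Bernoulli-noise martingale) as a Stieltjes integral of $-d\Phi(n\eee^{-y})$ against the centered process $L(y)-y/\mu$ on $y\in[0,\log n]$.

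\textit{Limit identification.} The renewal CLT upgrades to the functional statement $\bigl(L(u\log n)-u\log n/\mu\bigr)/(\sigma\mu^{-3/2}\sqrt{\log n})\Rightarrow B(u)$ on $[0,1]$ (valid since $\sigma^2<\infty$). Under \eqref{eq:regular}, Karamata's theorem gives $\Phi\bigl(n\eee^{-(1-x)\log n}\bigr)=\varphi(x\log n)\sim x^\beta\varphi(\log n)$ uniformly on compact subsets of $(0,1]$. Substituting $y=(1-x)\log n$, dividing by the normalization $(\sigma^2\mu^{-3}\log n)^{1/2}\Phi(n)$, and applying integration by parts and the continuous mapping theorem yields the Gaussian limit $\int_0^1 B(1-x)\,d(x^\beta)=\beta\int_0^1 B(1-x)x^{\beta-1}dx$.

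\textbf{Main obstacle.} The principal difficulty is the reduction of the compensated jump integral to a continuous functional of the diffusively rescaled inverse $L$. One must show that the contributions from heights $y$ near $0$ and near $\log n$, where the regular-variation asymptotic for $\varphi$ degenerates, are asymptotically negligible after the normalization; this requires Potter-type uniform bounds on $\varphi$ together with a truncation at $y=\varepsilon\log n$ and $y=(1-\varepsilon)\log n$. Once this continuous functional is extracted, Karamata and continuous mapping deliver the stated limit.
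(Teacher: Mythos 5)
A preliminary remark: the paper does not prove Proposition \ref{prop:clt} at all --- it is quoted from Theorem 3.1(a) of Gnedin and Iksanov (2012) --- so your proposal can only be measured against that reference and against the machinery this paper deploys for Theorem \ref{thm:main}. Your outline is essentially that machinery: Poissonize, kill the Bernoulli occupancy noise by the conditional variance bound, reduce to the conditional mean, represent it through the inverse subordinator, apply an invariance principle plus regular variation, and de-Poissonize. This is the structure of Steps 1--3 of Section \ref{sect:main} (with a functional CLT for $L$ in place of the strong approximation of Lemma \ref{chs}), and it is also the renewal-theoretic route of the cited source; the constants you obtain are the right ones, and it is a genuine merit of your argument that only \eqref{eq:regular} is used (convergence of the normalized measures $-{\rm d}_y\Phi(n\eee^{-y})/\Phi(n)$ suffices, no derivative condition on $\varphi$), which is exactly the level of generality of the proposition.

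One displayed claim is wrong as written, and two reductions are asserted rather than proved. The compensated Poisson integral $\int_0^\infty\int_{(0,\infty)}\bigl(1-\eee^{-n\eee^{-S(t-)}(1-\eee^{-x})}\bigr)\widetilde{\mathcal N}({\rm d}t,{\rm d}x)$ equals $\me[K_{\pi_n}\mid S]-A_n$ with the \emph{random} compensator $A_n:=\int_0^\infty\Phi(n\eee^{-S(t)})\,{\rm d}t$ (the paper's $A(\log n)$), not $\me[K_{\pi_n}\mid S]-\me K_{\pi_n}$: the compensator integrates the predictable integrand against ${\rm d}t\,\nu({\rm d}x)$ and stays random. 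Your subsequent text does treat $A_n$ as the main term, so the plan survives, but you must then add: (i) a proof that the jump-noise martingale is negligible, e.g.\ its variance is bounded by $\me A_n\asymp(\log n)^{\beta+1}\ell(\log n)=o\bigl(\Phi(n)^2\log n\bigr)$ since $\beta>0$ (the analogue of your Bernoulli-variance bound, which you did carry out); and (ii) a proof that the deterministic centering $\me K_{\pi_n}=\me A_n$ may be replaced by $\mu^{-1}\int_1^n x^{-1}\Phi(x)\,{\rm d}x$ at cost $O(\Phi(n))=o\bigl((\log n)^{1/2}\Phi(n)\bigr)$; this follows from writing $\me A_n=\int_{[0,\infty)}\Phi(n\eee^{-y})\,{\rm d}U(y)$ with the renewal function satisfying $U(y)=y/\mu+O(1)$ (Lorden's inequality, available because $\sigma^2<\infty$), together with the integrability of $\varphi$ on $(-\infty,0]$ used in Step~1 of the paper to control the range $y>\log n$. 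With these two items supplied, your integration by parts, the functional CLT for $L$ at scale $\sigma\mu^{-3/2}(\log n)^{1/2}$, and continuous mapping indeed give the limit $\beta\int_0^1B(1-x)x^{\beta-1}\,{\rm d}x$; note also that de-Poissonization can be short-circuited, as in the paper's Step~3, by the exact identity $K_n=\mathcal{K}(S_n)$ plus slow variation of $x\mapsto(\log x)^{1/2}\Phi(x)$.
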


Observe that the limit random variable has the normal distribution with mean $0$ and variance $(2\beta+1)^{-1}$. Typically, a CLT provides a hint concerning a possible form of a LIL: one should use the same centering, whereas the normalization has to be multiplied by square root of two times the iterated logarithm. The argument of the iterated logarithm should be chosen in the natural scale of the CLT, which is $\log n$ in Proposition \ref{prop:clt}. We intend to show that this idea works smoothly in the present context. However, there is a minor complication. We cannot prove a LIL under the sole assumption \eqref{eq:regular}. We need an additional property that $\varphi^\prime$ is regularly varying at $\infty$ of index $\beta-1$. To ensure this as well as \eqref{eq:regular} we assume that, for all $\lambda>0$,
\begin{equation}\label{eq:Haan}
\lim_{t\to\infty}\frac{\Phi(\lambda t)-\Phi(t)}{\beta (\log t)^{\beta-1}\ell(\log t)}=\log \lambda.
\end{equation}
It will be explained in Lemma \ref{lem:haan} that condition \eqref{eq:Haan} ensures \eqref{eq:regular} and is equivalent to
\begin{equation}\label{eq:derivative}
\varphi^\prime(t)~\sim~ \beta t^{\beta-1}\ell(t),\quad t\to\infty.
\end{equation}

Given next is our main result.
\begin{thm}\label{thm:main}
Suppose \eqref{eq:Haan} and $\sigma^2\in (0,\infty)$. Then
\begin{equation}\label{eq:inter10}
C\bigg(\bigg(\frac{K_n-\mu^{-1}\int_1^n x^{-1}\Phi(x){\rm d}x}{(2\sigma^2\mu^{-3}(2\beta+1)^{-1}\log n \log\log\log n)^{1/2}\Phi(n)}: n~\text{{\rm large enough}}\bigg)\bigg)=[-1,1]\quad\text{{\rm a.s.}}
\end{equation}
\end{thm}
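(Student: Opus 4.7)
The iterated-logarithm scale $(\log n\log\log\log n)^{1/2}$ in \eqref{eq:inter10} is the natural LIL scale associated with the time variable $t=\log n$ appearing in Proposition \ref{prop:clt}. Accordingly, the plan is to prove a LIL in $t$ for a suitable Gaussian functional and then to transfer it to $K_n$ via a strong approximation.

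The key auxiliary object is the Lebesgue convolution
$$W(t):=\int_0^t B(t-u)\,\varphi^\prime(u)\,{\rm d}u,\quad t>0,$$
where $(B(u))_{u\geq 0}$ is a standard Brownian motion. By \eqref{eq:derivative}, $\varphi^\prime$ is regularly varying of index $\beta-1$, so Karamata's theorem together with an integration by parts yields $\me[W(t)^2]\sim(2\beta+1)^{-1}\,t\,\varphi(t)^2$ as $t\to\infty$. The first step is to upgrade this variance estimate into a LIL
$$C\bigg(\bigg(\frac{W(t)}{(2(2\beta+1)^{-1}\,t\,\varphi(t)^2\log\log t)^{1/2}}:t\geq 3\bigg)\bigg)=[-1,1]\quad\text{a.s.}$$
by the classical route: discretize along $t_k=\alpha^k$ with $\alpha>1$, decompose $W(t_k)$ into the independent Gaussian contributions arising from the increments of $B$ over the intervals $(t_{j-1},t_j]$, apply Borel--Cantelli with sharp Gaussian tail bounds, and control oscillations on each $[t_{k-1},t_k]$ via Gaussian concentration and a chaining argument; the regular variation of $\varphi^\prime$ lets us track each block's contribution explicitly and recover the exact LIL constants.

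The second step couples $K_n$ with $\mu^{-3/2}\sigma\,W(\log n)$. Using the Poisson embedding of \cite{Gnedin+Iksanov:2012}, we first replace $K_n$ by its Poissonized version $\hat K(t)$ at $t=n$ with discrepancy negligible at the LIL scale (by de-Poissonization along a thin subsequence, Chebyshev, and monotonicity of $n\mapsto K_n$). The centered process $\hat K(t)-\me[\hat K(t)]$ admits a representation as a stochastic integral of a deterministic kernel against the compensated Poisson random measure driving the jumps of $S$; after the change of variable $u=-\log(1-\eee^{-x})$, which aligns the tail of this kernel with $\varphi$, its variance structure matches that of $W$ up to the factor $\mu^{-3}\sigma^2$ produced by the second moment of $S(1)$. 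A strong approximation then yields a coupling with $\mu^{-3/2}\sigma\,W(\log n)$ whose error is $o((\log n\log\log\log n)^{1/2}\Phi(n))$ a.s.; together with the identification of $\me[\hat K(n)]$ with $\mu^{-1}\int_1^n x^{-1}\Phi(x){\rm d}x$ up to lower order and with $\varphi(\log n)\sim\Phi(n)$ from \eqref{eq:regular}, Step 1 applied at $t=\log n$ then delivers \eqref{eq:inter10}. The main obstacle is this strong approximation: standard KMT-type results for i.i.d.\ sums do not directly apply because the integrand is time-inhomogeneous and weighted by $\varphi^\prime$, so we plan to use a blockwise Gaussian coupling along $n_k=\lfloor\eee^k\rfloor$ in the spirit of \cite{Iksanov+Jedidi+Bouzzefour:2017}, controlling the accumulated error via a variance bound that exploits the refined regular variation afforded by \eqref{eq:Haan} and interpolating between subsequence points using the monotonicity of $K_n$.
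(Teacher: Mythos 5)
Your overall architecture is sound and close in spirit to the paper's: Poissonize, identify the Gaussian functional $\int_0^t B(t-u)\varphi^\prime(u)\,{\rm d}u$ with variance $\sim (2\beta+1)^{-1}t\varphi(t)^2$, prove an LIL for it (your Step 1 is essentially Proposition \ref{prop:lil}, and your proposed block/Borel--Cantelli proof is the same route the paper takes), and de-Poissonize at the end (comparable to the paper's Step 3, which uses $\mathcal{K}(S_n)=K_n$, $K_{\pi(t)}=\mathcal{K}(t)$, slow variation of the normalization, and the LIL for random walks/renewal processes).

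The genuine gap is in your second step, which is the heart of the theorem, and it is twofold. First, the representation you assert is not correct: $\hat K(t)-\me[\hat K(t)]$ is \emph{not} a stochastic integral of a deterministic kernel against the compensated Poisson random measure of the jumps of $S$. Conditionally on $S$ there is additional occupancy noise from the sample, and even the conditional mean $A(t)=\int_0^\infty\varphi(t-S(v))\,{\rm d}v$ is a nonlinear functional of the jump measure (it depends on the running sums $S(v)$, equivalently on $S^\leftarrow$), so its variance structure cannot be read off a deterministic kernel; the factor $\sigma^2\mu^{-3}$ arises from the fluctuations of the inverse subordinator, not from a direct kernel computation. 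Second, the coupling you need --- an a.s.\ approximation of $K_n$ by $\sigma\mu^{-3/2}\int_0^{\log n}W(\log n-x)\varphi^\prime(x)\,{\rm d}x$ with error $o((\log n\log\log\log n)^{1/2}\Phi(n))$ --- is exactly what you yourself call ``the main obstacle,'' and it is only announced, not proved; a blockwise coupling ``in the spirit of'' \cite{Iksanov+Jedidi+Bouzzefour:2017} does not transfer, because that argument exploits the compound Poisson structure (finitely many jumps per finite time interval), which fails here since \eqref{eq:Haan} forces $\nu$ to be infinite. The paper closes precisely this gap with two ingredients absent from your outline: (i) a reduction of $\mathcal{K}(\eee^t)$ to its conditional mean $A_1(t)$ with a.s.\ error $o(t^{1/2}\varphi(t))$, obtained from the martingale representation of Lemma 6.4 in \cite{Gnedin+Pitman+Yor:2006} together with a Burkholder--Davis--Gundy-type moment bound, Borel--Cantelli along integers and monotonicity; and (ii) the known strong approximation of $S^\leftarrow$ by Brownian motion (Lemma \ref{chs}, from \cite{Kallenberg:1997}), which after integration by parts converts $A_1(t)-\mu^{-1}\int_0^t\varphi(y)\,{\rm d}y$ into $\sigma\mu^{-3/2}\int_0^t W(t-x)\varphi^\prime(x)\,{\rm d}x$ plus a uniformly controlled error, so that no new KMT-type coupling is ever needed. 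Without (i) and (ii), or workable substitutes, your proposal is a plausible plan but not a proof.
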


The proof of Proposition \ref{main} given in \cite{Iksanov+Jedidi+Bouzzefour:2017} was based on an a.s.\ approximation of $K_{\lfloor \eee^n\rfloor}$ by $\sum_{k\geq 1}\1_{\{\xi_1+\ldots+\xi_{k-1}-\log(1-\eee^{-\xi_k})\leq n\}}$, where $\xi_1$, $\xi_2,\ldots$ are independent copies of a random variable $\xi$ having the same distribution as jumps of a compound Poisson process $S$. There is a natural enumeration of gaps, and the random hitting probability of the $k$th gap is $\eee^{-\xi_1-\ldots-\xi_{k-1}+\log(1-\eee^{-\xi_k})}$, so that the approximating quantity can be thought of as the number of gaps with a `large' hitting probability. Roughly speaking, the argument in \cite{Iksanov+Jedidi+Bouzzefour:2017} essentially exploited the fact that a compound Poisson process admits a.s.\ finitely many jumps within each finite time-interval. In the case where $S$ is not a compound Poisson process (which is the setting of Theorem \ref{thm:main}) the range of $S$ has topology of a Cantor set, and the aforementioned gap-counting is no longer adequate. Our proof of Theorem \ref{thm:main} uses a two-stage a.s.\ approximation of the Poissonized number of blocks obtained by replacing $n$ with $\pi(t)$, where $(\pi(t))_{t\geq 0}$ is a Poisson process which is independent of everything else. The Poissonized number of blocks is first approximated by its conditional mean which in its turn is approximated by the Lebesgue convolution of a standard Brownian motion and a deterministic function. Of course, at the end we have to de-Poissonize, that is, to get back to the original version with the exponential sample of size $n$.

A list of typical subordinators $S$ satisfying \eqref{eq:Haan} includes the gamma subordinators with the L\'{e}vy measures $\nu$ given by
\begin{equation}\label{eq:gamma}
\nu({\rm d}x)=\frac{\theta\eee^{-\lambda x}}{x}\1_{(0,\infty)}(x){\rm d}x
\end{equation}
for some positive $\theta$ and $\lambda$ and closely related subordinators with the L\'{e}vy measures given by
\begin{equation}\label{eq:gammalike}
\nu({\rm d}x)=\frac{\theta\eee^{-\lambda x}}{1-\eee^{-x}}\1_{(0,\infty)}(x){\rm d}x.
\end{equation}
This observation justifies the title of the paper.

We close this section by giving specializations of Theorem \ref{thm:main} to subordinators with the L\'{e}vy measures \eqref{eq:gamma} and \eqref{eq:gammalike}.
\begin{cor}\label{cor:appl}
(a) Suppose \eqref{eq:gamma}. Then
\begin{equation*}\label{eq:inter10}
C\bigg(\bigg(\frac{K_n-(\lambda/2)(\log n)^2}{(2\lambda (\log n)^3 \log\log\log n)^{1/2}}: n~\text{{\rm large enough}}\bigg)\bigg)=[-1,1]\quad\text{{\rm a.s.}}
\end{equation*}

\noindent (b) Suppose \eqref{eq:gammalike}. Then
\begin{equation*}\label{eq:inter11}
C\bigg(\bigg(\frac{K_n-(2\Psi^\prime(\lambda))^{-1}(\log n)^2}{(2|\Psi^{\prime\prime}(\lambda)|(\Psi^\prime(\lambda))^{-3}(\log n)^3 \log\log\log n)^{1/2}}: n~\text{{\rm large enough}}\bigg)\bigg)=[-1,1]\quad\text{{\rm a.s.}},
\end{equation*}
where $\Psi$ is the logarithmic derivative of the gamma function.
\end{cor}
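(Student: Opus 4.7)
My plan is to apply Theorem \ref{thm:main} in each of the two cases, after reducing all quantities appearing in that theorem to the explicit forms displayed in Corollary \ref{cor:appl}. The moments $\mu$ and $\sigma^2$ come first. In case (a) they can be read off from the Laplace exponent $\theta\log(1+s/\lambda)$ of the gamma subordinator, giving $\mu=\theta/\lambda$ and $\sigma^2=\theta/\lambda^2$. In case (b), expanding $1/(1-\eee^{-x})=\sum_{k\geq 0}\eee^{-kx}$ and integrating term by term against $x\,{\rm d}x$ and $x^2\,{\rm d}x$ produces $\mu=\theta\Psi^\prime(\lambda)$ and $\sigma^2=\theta|\Psi^{\prime\prime}(\lambda)|$ via the standard series for the trigamma and tetragamma functions.

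Next I would check \eqref{eq:Haan} and identify $\beta$ and $\ell$. Performing the substitution $y=1-\eee^{-x}$ in the defining integral rewrites $\Phi(t)$ as $\theta\int_0^1(1-\eee^{-ty})(1-y)^{\lambda-1}/h(y)\,{\rm d}y$, with $h(y)=-\log(1-y)$ in case (a) and $h(y)=y$ in case (b); both satisfy $h(y)/y\to 1$ as $y\to 0$. Splitting the integral at a small threshold and using the classical estimate $\int_0^t u^{-1}(1-\eee^{-u})\,{\rm d}u\sim\log t$ yields $\Phi(t)=\theta\log t+O(1)$. Differentiating under the integral and repeating the substitution gives $\Phi^\prime(t)\sim\theta/t$, so $\varphi^\prime(t)=\eee^t\Phi^\prime(\eee^t)\to\theta$; this is \eqref{eq:derivative} with $\beta=1$ and $\ell\equiv\theta$, hence \eqref{eq:Haan} by Lemma \ref{lem:haan}, and the hypotheses of Theorem \ref{thm:main} are in place.

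Substituting into Theorem \ref{thm:main}, the centering integral becomes $\mu^{-1}\theta(\log n)^2/2+O(\log n)$, which reduces to $(\lambda/2)(\log n)^2$ in (a) and to $(2\Psi^\prime(\lambda))^{-1}(\log n)^2$ in (b); the $O(\log n)$ remainder is of strictly smaller order than the LIL denominator (which is of order $(\log n)^{3/2}(\log\log\log n)^{1/2}$) and may be absorbed in the cluster-set statement. Substituting $\beta=1$ together with the explicit $\mu$, $\sigma^2$ and $\Phi(n)\sim\theta\log n$ into $(2\sigma^2\mu^{-3}(2\beta+1)^{-1}\log n\log\log\log n)^{1/2}\Phi(n)$ then produces the denominators displayed in (a) and (b), and the cluster set $[-1,1]$ is inherited directly from Theorem \ref{thm:main}.

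The main technical step, and the main obstacle I anticipate, is the verification of \eqref{eq:Haan}, which is a de Haan--type condition rather than merely regular variation of $\Phi$. The cleanest route is via \eqref{eq:derivative}, as above; the delicate point is tracking the Jacobian of $y=1-\eee^{-x}$ near $y=0$ precisely enough to pin down the constant $\theta$ in $t\Phi^\prime(t)\to\theta$, since any looseness there would distort the constants in the final normalization.
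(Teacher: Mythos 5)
Your overall strategy coincides with the paper's: both arguments reduce the corollary to Theorem \ref{thm:main} by computing $\mu$ and $\sigma^2$, identifying $\beta=1$ and $\ell\equiv\theta$, and checking that the centering $\mu^{-1}\int_1^n x^{-1}\Phi(x)\,{\rm d}x$ may be replaced by the displayed quadratic in $\log n$. Where you genuinely differ is the verification of \eqref{eq:Haan}: you establish $t\Phi^\prime(t)\to\theta$, i.e.\ \eqref{eq:derivative} with $\beta=1$ and $\ell\equiv\theta$, and then invoke the equivalence in Lemma \ref{lem:haan}; for the centering you only need the crude expansion $\Phi(t)=\theta\log t+O(1)$, whose $O(\log n)$ contribution to $\int_1^n x^{-1}\Phi(x)\,{\rm d}x$ is $o((\log n)^{3/2})$ and hence negligible against the normalization. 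The paper instead derives the exact second-order behaviour, $\Phi(t)=\theta(\log t+\gamma-\log\lambda)+o(1)$ under \eqref{eq:gamma} and $\Phi(t)=\theta\log t-\theta\Psi(\lambda)+o(1)$ under \eqref{eq:gammalike}, via the change of variable $y=1-\eee^{-x}$ and Frullani/digamma identities; that expansion gives \eqref{eq:Haan} directly (differences $\Phi(\lambda^\prime t)-\Phi(t)$ converge) and pins the centering down to $O(1)$. Your route is lighter, avoiding the Frullani computations, at the modest cost of justifying differentiation under the integral sign and the Abelian estimate $\int_0^1\eee^{-ty}q(y)\,{\rm d}y\sim q(0+)/t$, both routine here; your values $\mu=\theta/\lambda$, $\sigma^2=\theta/\lambda^2$ and $\mu=\theta\Psi^\prime(\lambda)$, $\sigma^2=\theta|\Psi^{\prime\prime}(\lambda)|$ agree with the paper's.

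The one step you should not wave through is the final substitution of constants. Carrying it out with $\beta=1$, $\Phi(n)\sim\theta\log n$ and the above $\mu$, $\sigma^2$, the normalization of Theorem \ref{thm:main} becomes $\big(2\sigma^2\mu^{-3}\cdot 3^{-1}\log n\log\log\log n\big)^{1/2}\Phi(n)\sim\big((2\lambda/3)(\log n)^3\log\log\log n\big)^{1/2}$ in case (a), and $\big((2/3)|\Psi^{\prime\prime}(\lambda)|(\Psi^\prime(\lambda))^{-3}(\log n)^3\log\log\log n\big)^{1/2}$ in case (b): the factor $(2\beta+1)^{-1}=1/3$ does not cancel, since $\theta$ cancels between $\sigma^2\mu^{-3}$ and $\Phi(n)^2\sim\theta^2(\log n)^2$ but nothing absorbs the $3$. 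So your assertion that the substitution ``produces the denominators displayed'' is not accurate as written: what Theorem \ref{thm:main} delivers differs from the displayed denominators by a factor $3^{-1/2}$ (equivalently, the displayed statement appears to omit the $(2\beta+1)^{-1}$ factor; note the paper's own proof never performs this arithmetic either). Make this computation explicit rather than asserting agreement, and state the resulting constant you actually obtain.
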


The fact that $\theta$ does not appear in Corollary \ref{cor:appl} is not surprising. Multiplying a L\'{e}vy measure by a positive factor leads to a time-change of the corresponding subordinator, which does not affect
$K_n$.

The remainder of the paper is structured as follows. After giving a number of auxiliary results in Section \ref{sect:aux}, we prove Theorem \ref{thm:main} in Section \ref{sect:main} and Corollary \ref{cor:appl} in Section \ref{sect:appl}.

\section{Auxiliary results}\label{sect:aux}

We first justify the claim made in Section \ref{sect:intro}.
\begin{lemma}\label{lem:haan}
Condition \eqref{eq:Haan} ensures \eqref{eq:regular} and is equivalent to \eqref{eq:derivative}.
\end{lemma}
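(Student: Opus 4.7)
My plan is to rephrase both conditions in terms of $\varphi=\Phi\circ\exp$. With the substitution $u=\log t$, $a=\log\lambda$, condition \eqref{eq:Haan} becomes the statement that, for every $a\in\mathbb{R}$,
$$\varphi(u+a)-\varphi(u)~\sim~a\,\beta u^{\beta-1}\ell(u),\quad u\to\infty,$$
or equivalently that $\Phi$ lies in de Haan's class $\Pi$ with auxiliary function $g(t):=\beta(\log t)^{\beta-1}\ell(\log t)$. Since $g$ depends on $t$ only through $\log t$, it is slowly varying at $\infty$.

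For the easy implication \eqref{eq:derivative}$\Rightarrow$\eqref{eq:Haan}, I would simply integrate: the uniform convergence theorem for regularly varying functions applied to $s\mapsto\beta s^{\beta-1}\ell(s)$ on the compact segment $[u,u+a]$ yields $\int_u^{u+a}\varphi'(s){\rm d}s\sim a\beta u^{\beta-1}\ell(u)$, which is exactly the reformulation of \eqref{eq:Haan} above.

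The substantive direction is \eqref{eq:Haan}$\Rightarrow$\eqref{eq:derivative}. The key observation is that $\Phi$ is a Bernstein function: for each fixed $x>0$ the map $t\mapsto 1-\exp\{-t(1-\eee^{-x})\}$ is Bernstein, and this class is closed under integration against a positive measure. In particular $\Phi$ is concave on $(0,\infty)$, hence $\Phi'$ is nonincreasing. I would then invoke the monotone density theorem for de Haan's class $\Pi$: because $\Phi\in\Pi$ with slowly varying auxiliary function $g$ and $\Phi'$ is monotone, it follows that $\Phi'(t)\sim g(t)/t$. Rewriting with $t=\eee^u$ gives
$$\varphi'(u)=\eee^u\Phi'(\eee^u)~\sim~g(\eee^u)=\beta u^{\beta-1}\ell(u),$$
which is \eqref{eq:derivative}.

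Finally, once \eqref{eq:derivative} is in hand, \eqref{eq:regular} follows from Karamata's theorem: $\varphi'$ is regularly varying of index $\beta-1>-1$, so integration gives $\varphi(u)=\varphi(0)+\int_0^u\varphi'(s){\rm d}s\sim u^\beta\ell(u)$ as $u\to\infty$ (the constant term is absorbed since $u^\beta\ell(u)\to\infty$). The main obstacle I anticipate is locating a clean formulation of the monotone density theorem for the class $\Pi$ to cite; granted that, the remaining steps are routine manipulations of (slowly) regularly varying functions combined with the Bernstein structure of $\Phi$.
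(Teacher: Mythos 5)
Your proof is correct and follows essentially the same route as the paper: the crux in both is the monotone density theorem for de Haan's class $\Pi$ (Theorem 3.6.8 in \cite{BGT}, which is exactly the ``clean formulation'' you were worried about locating), applied to $\Phi\in\Pi$ with slowly varying auxiliary function using the fact that $\Phi^\prime$ is nonincreasing. The only minor differences are that the paper deduces \eqref{eq:regular} directly from \eqref{eq:Haan} by citing Theorem 3.7.3 of \cite{BGT} instead of your Karamata-integration of \eqref{eq:derivative}, and it uses the equivalence form of Theorem 3.6.8 where you prove the converse implication \eqref{eq:derivative}$\Rightarrow$\eqref{eq:Haan} by a direct integration and the uniform convergence theorem.
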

\begin{proof}
Condition \eqref{eq:Haan} ensures \eqref{eq:regular} according to the implication (3.7.6) $\Rightarrow$ (3.7.8) of Theorem 3.7.3 in \cite{BGT}. Since $\Phi^\prime$ is a nonincreasing function, condition \eqref{eq:Haan} is equivalent to $$\Phi^\prime(t)\sim \frac{\beta (\log t)^{\beta-1}\ell(\log t)}{t},\quad t\to\infty$$ by Theorem 3.6.8 in \cite{BGT}. The latter relation is equivalent to \eqref{eq:derivative}.
\end{proof}

We proceed with a strong approximation result. For a subordinator $S$, put $S^\leftarrow(t):=\inf\{u\geq 0: S(u)>t\}$ for $t\geq 0$, so that $S^\leftarrow$ is the generalized inverse (random) function of $S$.
\begin{lemma}\label{chs}
Assume that $\sigma^2={\rm Var}[S(1)]=\int_{(0,\infty)}x^2\nu({\rm d}x)\in (0,\infty)$. Then there exists
a standard Brownian motion $W$ such that
\begin{equation}\label{eq:appr}
\lim_{t\to\infty}\frac{\sup_{0\leq u\leq
t}\,\big|S^\leftarrow(u)-\mu^{-1}u-\sigma\mu^{-3/2}W(u)\big|}{(t\log\log t)^{1/2}}=0\quad\text{{\rm
a.s.}},
\end{equation}
where $\mu=\me [S(1)]=\int_{(0,\infty)} x\nu({\rm d}x)<\infty$. In particular,
\begin{equation}\label{eq:lil}
{\lim\sup}_{n\to\infty}\frac{\sup_{y\in [0,\,n]}|S^\leftarrow(y)-\mu^{-1}y|}{(2n\log\log n)^{1/2}}=\frac{\sigma}{\mu^3}\quad\text{{\rm a.s.}}
\end{equation}
\end{lemma}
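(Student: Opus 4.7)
The plan is to combine a strong Gaussian approximation for $S$ itself with a standard inversion argument. Since $\sigma^2\in(0,\infty)$, a classical strong invariance principle for L\'evy processes with finite second moment (the continuous-time analogue of Strassen's strong embedding for sums of i.i.d.\ variables) supplies a standard Brownian motion $W_0$ such that
\begin{equation*}
\sup_{0\leq s\leq t}\big|S(s)-\mu s-\sigma W_0(s)\big|=o\big((t\log\log t)^{1/2}\big)\quad \text{a.s.}
\end{equation*}
I would take this as a known input and devote the bulk of the proof to transferring it to $\tau(u):=S^\leftarrow(u)$.

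From the basic identities $S(\tau(u)-)\leq u\leq S(\tau(u))$, evaluating the strong approximation at $s=\tau(u)$ gives
\begin{equation*}
\mu\tau(u)-u=-\sigma W_0(\tau(u))+R_1(u)+R_2(u),
\end{equation*}
where $R_1(u):=(\mu s+\sigma W_0(s)-S(s))|_{s=\tau(u)}$ and $R_2(u):=S(\tau(u))-u\in[0,\Delta S(\tau(u))]$. The strong law $\tau(u)/u\to\mu^{-1}$ a.s.\ makes $R_1(u)=o((u\log\log u)^{1/2})$ uniformly on $[0,t]$. For $R_2$, I would split $S$ into its bounded small-jumps part and the compound Poisson part with large jumps $X_1,X_2,\ldots$ satisfying $\me X_i^2<\infty$; then $\max_{i\leq n}X_i\leq(\sum_{i\leq n}X_i^2)^{1/2}=O(\sqrt{n})$ a.s., whence $\sup_{s\leq t}\Delta S(s)=O(\sqrt{t})=o((t\log\log t)^{1/2})$ a.s., so $R_2$ is also negligible.

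A byproduct of the previous step is the crude bound $|\tau(u)-u/\mu|=O((u\log\log u)^{1/2})$. L\'evy's modulus of continuity for $W_0$ then allows replacing $W_0(\tau(u))$ by $W_0(u/\mu)$ up to an additional $o((u\log\log u)^{1/2})$ error. Setting $W(u):=-\sqrt{\mu}\,W_0(u/\mu)$, which is a standard Brownian motion, converts the coefficient $-\sigma/\mu$ into $\sigma\mu^{-3/2}$ and produces \eqref{eq:appr}; uniformity in $u\leq t$ is inherited from the input strong approximation.

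The LIL \eqref{eq:lil} is then immediate from \eqref{eq:appr}, the triangle inequality, and the classical LIL $\limsup_{t\to\infty}\sup_{u\leq t}|W(u)|/(2t\log\log t)^{1/2}=1$ a.s.\ for standard Brownian motion. The only genuinely non-routine step is controlling $\sup_{s\leq t}\Delta S(s)$ under the bare assumption $\sigma^2<\infty$ (a direct tail estimate $\mmp\{\sup_{s\leq t}\Delta S(s)>c\}\leq t\nu((c,\infty))=o(t/c^2)$ fails to sum along geometric subsequences when $c\asymp(t\log\log t)^{1/2}$), which is why the small-jumps/large-jumps decomposition I described is the main technical device and the spot I expect to be the main obstacle.
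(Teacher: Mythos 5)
Your proposal is correct in substance, but it follows a genuinely different route from the paper. The paper disposes of \eqref{eq:appr} in two lines: it cites a ready-made strong approximation theorem for first-passage times of random walks (Theorem 12.13 in Kallenberg) applied to the embedded walk $(S(\lfloor v\rfloor))_{v\ge 0}$, which yields \eqref{eq:appr} with $\tau(u)=\inf\{v\ge 0: S(\lfloor v\rfloor)>u\}$ in place of $S^\leftarrow(u)$, and then uses the sandwich $\tau(u)-S^\leftarrow(u)\in[0,1]$; \eqref{eq:lil} then follows exactly as in your last step. You instead take as input a strong invariance principle for the subordinator $S$ itself and carry out the inversion by hand: evaluating at $s=S^\leftarrow(u)$, controlling the overshoot $R_2$ via the small-jump/large-jump decomposition and $\max_{i\le n}X_i=O(\sqrt n)$ for square-integrable jumps, and then replacing $W_0(S^\leftarrow(u))$ by $W_0(u/\mu)$ using the crude bound $|S^\leftarrow(u)-u/\mu|=O((u\log\log u)^{1/2})$ together with a Brownian increment estimate. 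In effect you re-prove the first-passage strong approximation that the paper imports wholesale; your version is longer but self-contained modulo the invariance principle for $S$, and it makes visible exactly where $\sigma^2<\infty$ is used (overshoot control), whereas the paper's version is shorter and hides these technicalities in the citation. Two minor points: the increment estimate you need is not literally L\'evy's modulus of continuity (which concerns $h\to 0$ on a fixed compact interval) but the standard large-increment bound of Cs\"org\H{o}--R\'ev\'esz type, $\sup_{s\le t}\sup_{h\le a_t}|W_0(s+h)-W_0(s)|=O((a_t\log t)^{1/2})$ a.s.\ with $a_t\asymp(t\log\log t)^{1/2}$, which is indeed $o((t\log\log t)^{1/2})$, so the step is fine once correctly attributed; and the constant your argument (like the paper's own derivation from \eqref{eq:appr}) produces in \eqref{eq:lil} is $\sigma\mu^{-3/2}$, so the value $\sigma/\mu^{3}$ displayed in the statement appears to be a typo rather than a defect of your proof.
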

\begin{proof}
By Theorem 12.13 on p.~227 in \cite{Kallenberg:1997}, relation \eqref{eq:appr} holds with $\tau(u):=\inf\{v\geq 0: S(\lfloor v\rfloor)>u\}$ replacing $S^\leftarrow(u)$. Noting that $\tau(u)-S^\leftarrow(u)\in [0,1]$ a.s.\ completes the proof of \eqref{eq:appr}.

The LIL \eqref{eq:lil} follows from \eqref{eq:appr} and the corresponding LIL for a Brownian motion. Also, \eqref{eq:lil} is a consequence of the LIL for $\tau$ given in Proposition 3.5 of \cite{Iksanov+Jedidi+Bouzzefour:2017}.
\end{proof}

The next lemma collects several properties of $S^\leftarrow$ to be used in the paper.

\begin{lemma}\label{lem:mom}
Under the sole assumption that $S$ is nondegenerate at $0$,

\noindent (a) for all $r>0$ and all $h>0$, $\me [(S^\leftarrow(h))^r]<\infty$;

\noindent (b) for all $r>0$, $\me [(S^\leftarrow(t))^r]]=O(t^r)$ as $t\to\infty$;

\noindent (c) for all $h>0$ and all $\delta>0$, $$\lim_{n\to\infty}\frac{S^\leftarrow(n+h)-S^\leftarrow(n)}{n^\delta}=0\quad\text{{\rm a.s.}}$$
\end{lemma}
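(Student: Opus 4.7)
The common thread running through all three parts is that, because $S$ is a zero-drift, no-killing subordinator whose L\'evy measure $\nu$ is nonzero (which is what ``nondegenerate at $0$'' amounts to in the present setting), its Laplace exponent
$$\phi_S(\lambda):=\int_{(0,\infty)}(1-\eee^{-\lambda x})\,\nu({\rm d}x)$$
is strictly positive for every $\lambda>0$. I plan to combine this with the identity $\{S^\leftarrow(t)>u\}=\{S(u)\leq t\}$ and the strong Markov property of $S$ to control both the tail of $S^\leftarrow(h)$ and the growth of its increments.

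For part (a), an exponential Markov bound gives
$$\mmp\{S^\leftarrow(h)>u\}=\mmp\{S(u)\leq h\}\leq \eee^{\lambda h}\,\me [\eee^{-\lambda S(u)}]=\eee^{\lambda h-u\phi_S(\lambda)},$$
and this exponential tail immediately yields finite moments of every order. For part (b), set $\tau_k:=S^\leftarrow(k)$. Since $S(\tau_{k-1})\geq k-1$, the residual distance to level $k$ is at most $1$, so applying the strong Markov property at $\tau_{k-1}$ yields the stochastic domination $\tau_k-\tau_{k-1}\leq_{\mathrm{st}}X_k$, where $X_1,X_2,\ldots$ are i.i.d.\ copies of $S^\leftarrow(1)$. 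Monotonicity of $S^\leftarrow$ then gives $S^\leftarrow(t)\leq_{\mathrm{st}} X_1+\cdots+X_{\lceil t\rceil}$, and Minkowski's inequality (for $r\geq 1$) together with Jensen's inequality and $\me [X_1]<\infty$ from (a) (for $r\in(0,1)$) produces $\me [(S^\leftarrow(t))^r]=O(t^r)$. For part (c), the strong Markov property at $S^\leftarrow(n)$ combined with $S(S^\leftarrow(n))\geq n$ shows that $S^\leftarrow(n+h)-S^\leftarrow(n)$ is stochastically dominated by an independent copy of $S^\leftarrow(h)$; choosing $r>1/\delta$, Markov's inequality and $\me [(S^\leftarrow(h))^r]<\infty$ yield
$$\sum_{n\geq 1}\mmp\big\{S^\leftarrow(n+h)-S^\leftarrow(n)>\varepsilon n^\delta\big\}\leq \varepsilon^{-r}\me [(S^\leftarrow(h))^r]\sum_{n\geq 1}n^{-r\delta}<\infty,$$
and the Borel--Cantelli lemma applied along a countable sequence $\varepsilon\downarrow 0$ delivers the claimed a.s.\ convergence.

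The only delicate point is the stochastic-domination argument underlying (b) and (c): one must apply the strong Markov property at the (not a priori bounded) stopping times $\tau_k$ and $S^\leftarrow(n)$ and absorb the nonnegative overshoots $S(\tau_k)-k$ and $S(S^\leftarrow(n))-n$ correctly into the resulting inequalities. Once this is carefully set up, everything else reduces to an exponential moment estimate for $S(u)$, Markov's inequality and Borel--Cantelli, so the write-up should be brief.
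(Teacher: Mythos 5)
Your proposal is correct, and parts (a) and (c) follow essentially the paper's own route: for (a) the paper likewise bounds $\mmp\{S^\leftarrow(h)>x\}=\mmp\{S(x)\leq h\}\leq \eee^{h}\me[\eee^{-S(x)}]=\eee^{h-\kappa x}$ with $\kappa=\int_{(0,\infty)}(1-\eee^{-y})\nu({\rm d}y)>0$ (your version with a general $\lambda$ is the same exponential Markov estimate), and for (c) it uses exactly your combination of the strong Markov property at $S^\leftarrow(n)$, the bound $\me[(S^\leftarrow(n+h)-S^\leftarrow(n))^r]\leq\me[(S^\leftarrow(h))^r]$, Markov's inequality and Borel--Cantelli. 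The genuine difference is in (b): the paper simply notes $S^\leftarrow(t)\leq\tau(t):=\inf\{v\geq 0: S(\lfloor v\rfloor)>t\}$ and quotes Theorem 5.1 in Gut's \emph{Stopped random walks} to get $\me[(\tau(t))^r]=O(t^r)$, whereas you give a self-contained argument by splitting $S^\leftarrow(t)$ into the increments $\tau_k-\tau_{k-1}$, $\tau_k:=S^\leftarrow(k)$, and controlling each one via the strong Markov property; this buys independence from the external reference at the cost of the delicate point you yourself flag. Concretely, the natural dominating variables $Y_k$ (first passage of the post-$\tau_{k-1}$ increments above level $1$) do satisfy $\tau_k-\tau_{k-1}\leq Y_k$ with $Y_k$ independent of $\mathcal{F}_{\tau_{k-1}}$ and distributed as $S^\leftarrow(1)$, but $Y_{k-1}$ need not be $\mathcal{F}_{\tau_{k-1}}$-measurable (when $S$ overshoots level $k-1$, $Y_{k-1}$ depends on the path beyond $\tau_{k-1}$), so the i.i.d.\ domination $S^\leftarrow(t)\leq_{\rm st}X_1+\cdots+X_{\lceil t\rceil}$ as stated requires a short induction (dominate $\tau_{n-1}+Y_n$ with $Y_n$ independent of $\tau_{n-1}$ and use that adding an independent summand preserves the stochastic order); alternatively, you can bypass independence entirely, since Minkowski only needs the marginal bounds $\me[(\tau_k-\tau_{k-1})^r]\leq\me[(S^\leftarrow(1))^r]$ obtained by conditioning on $\mathcal{F}_{\tau_{k-1}}$, and for $r\in(0,1)$ Jensen only needs $\me[S^\leftarrow(t)]\leq\lceil t\rceil\,\me[S^\leftarrow(1)]$. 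With that point spelled out, your argument for (b) is complete and more elementary than the paper's citation-based step, though somewhat longer.
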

\begin{proof}
Part (a) is justified by
\begin{multline*}
r^{-1}\me [(S^\leftarrow(h))^r]=\int_0^\infty x^{r-1}\mmp\{S^\leftarrow(h)>x\}{\rm d}x=\int_0^\infty x^{r-1}\mmp\{S(x)\leq h\}{\rm d}x\\\leq \eee^h\int_0^\infty x^{r-1}\me \eee^{-S(x)}{\rm d}x=\eee^h\int_0^\infty x^{r-1}\eee^{-\kappa x}{\rm d}x<\infty.
\end{multline*}
Here, $\kappa:=\int_{(0,\,\infty)} (1-\eee^{-x})\nu({\rm d}x)<\infty$, and we have used Markov's inequality.

Noting that $S^\leftarrow(t)\leq \tau(t)$ a.s., part (b) follows from Theorem 5.1 on p.~57 in \cite{Gut:2009}.

As for part (c), fix any $\delta>0$ and pick $r>0$ such that $r\delta>1$. By the strong Markov property of $S$ and part (a) of the lemma, $\me [(S^\leftarrow(n+h)-S^\leftarrow(n))^r]\leq \me [(S^\leftarrow(h))^r]<\infty$. 
Appealing now to Markov's inequality and the direct part of the Borel-Cantelli lemma we arrive at the claim.
\end{proof}
Now we state a LIL 
for the Lebesgue convolution of a standard Brownian motion and a regularly varying function. This is a generalization of Theorem 1 in \cite{Lachal:1997} which treats the case where $f(x)=x^n$ for $x\geq 0$ and some $n\in\mn$. Proposition \ref{prop:lil} 
may be of independent interest.
\begin{assertion}\label{prop:lil}
Let $\alpha>0$, $f$ be a nonnegative differentiable function with its derivative $f^\prime$ being regularly varying at $\infty$ of index $\alpha-1$, and $B$ a standard Brownian motion. Then
$$C\bigg(\bigg(\frac{\int_0^t B(t-x)f^\prime(x){\rm d}x}{(2(2\alpha+1)^{-1}t\log\log t)^{1/2}f(t)}: t>\max(\eee, t_0)\bigg)\bigg)=[-1,1]\quad\text{{\rm a.s.}},$$ where $t_0>0$ is any fixed value such that $f(t)>0$ for all $t\geq t_0$.
\end{assertion}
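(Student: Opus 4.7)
The plan is to reduce the claim to a LIL for a Gaussian stochastic integral and then adapt classical subsequence techniques, with self-similar/stationary scaling providing the intuition for the constant in the pure-power case.

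\textbf{Reduction.} Substituting $u=t-x$ and integrating by parts (legitimate since $f\in C^1$ and $B$ is continuous) gives
\[
\int_0^t B(t-x)f^\prime(x){\rm d}x \,=\, \int_0^t B(u)f^\prime(t-u){\rm d}u \,=\, \tilde I(t) - f(0)B(t),
\]
where $\tilde I(t):=\int_0^t f(t-u){\rm d}B(u)$. Karamata's theorem applied to $f^\prime$ shows $f$ is regularly varying of index $\alpha>0$, hence $f(t)\to\infty$, so by the usual LIL for $B$ the term $f(0)B(t)=O(\sqrt{t\log\log t})$ is negligible compared to $f(t)\sqrt{t\log\log t}$. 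The variable $\tilde I(t)$ is centered Gaussian with
\[
v(t):={\rm Var}[\tilde I(t)]=\int_0^t f(u)^2{\rm d}u\,\sim\,\frac{t f(t)^2}{2\alpha+1},\qquad t\to\infty,
\]
again by Karamata (since $f^2$ is regularly varying of index $2\alpha$). Setting $\phi(t):=(2v(t)\log\log t)^{1/2}$, it suffices to show $C((\tilde I(t)/\phi(t)))=[-1,1]$ a.s.

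\textbf{Model case.} For the pure power $f(x)=x^\alpha$, rescaling $u\mapsto cu$ reveals that $\tilde I$ is self-similar of index $\alpha+1/2$; the Lamperti transform $Z(s):=\eee^{-s(\alpha+1/2)}\tilde I(\eee^s)$ is then a centered stationary Gaussian process with variance $1/(2\alpha+1)$ and exponentially decaying covariance $r(\tau)=\eee^{-\tau/2}\int_0^1 (1-v)^\alpha(1-\eee^{-\tau}v)^\alpha {\rm d}v$. The classical LIL for stationary Gaussian processes with vanishing correlations yields $\limsup_{s\to\infty}Z(s)/\sqrt{2\log s}=1/\sqrt{2\alpha+1}$ a.s., which upon reversing $s=\log t$ is exactly the claim in this case, and pins down the correct constant for general $f$.

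\textbf{Subsequence argument for general $f$.} Fix $\theta>1$, set $t_n:=\theta^n$, and decompose
\[
\tilde I(t_n)=X_n+Y_n,\qquad X_n:=\int_{t_{n-1}}^{t_n}f(t_n-u){\rm d}B(u),\quad Y_n:=\int_0^{t_{n-1}}f(t_n-u){\rm d}B(u).
\]
The $X_n$ are independent centered Gaussians by disjointness of Brownian increments, and regular variation of $f$ together with Karamata yields ${\rm Var}[X_n]\sim(1-\theta^{-1})^{2\alpha+1}v(t_n)$, so ${\rm Var}[Y_n]\sim(1-(1-\theta^{-1})^{2\alpha+1})v(t_n)$. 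For the upper bound: the Gaussian tail gives $\mmp(\tilde I(t_n)>(1+\epsilon)\phi(t_n))\leq(n\log\theta)^{-(1+\epsilon)^2}$, summable, so the first Borel-Cantelli lemma yields $\limsup_n \tilde I(t_n)/\phi(t_n)\leq 1+\epsilon$; the oscillation over $[t_n,t_{n+1}]$ is controlled by Borell's inequality as sketched below, and one lets $\theta\downarrow 1$. For the lower bound: the second Borel-Cantelli lemma applied to the independent events $\{X_n>(1-\epsilon)(2{\rm Var}[X_n]\log\log t_n)^{1/2}\}$ forces $X_n$ close to its maximal value infinitely often, while a separate first Borel-Cantelli argument applied to the Gaussian sequence $\{Y_n\}$ gives $|Y_n|\leq(1+\delta)(2{\rm Var}[Y_n]\log\log t_n)^{1/2}$ eventually. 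Summing the two, then letting $\theta\to\infty$ (so ${\rm Var}[Y_n]/v(t_n)\to 0$) and $\epsilon,\delta\downarrow 0$ gives $\limsup\geq 1$; symmetry yields $\liminf=-1$, and almost-sure continuity of $\tilde I$ (verified by Kolmogorov's criterion applied to the explicit variance of $\tilde I(t)-\tilde I(s)$) combined with the intermediate value theorem upgrades the two extrema to the full cluster set $[-1,1]$.

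\textbf{Main obstacle.} The delicate step is the interpolation estimate $\sup_{t\in[t_n,t_{n+1}]}|\tilde I(t)-\tilde I(t_n)|=o(\phi(t_n))$ as $\theta\downarrow 1$. Splitting
$\tilde I(t)-\tilde I(t_n)=\int_{t_n}^t f(t-u){\rm d}B(u)+\int_0^{t_n}[f(t-u)-f(t_n-u)]{\rm d}B(u)$,
one estimates the second (``perturbation'') term by writing $f(t-u)-f(t_n-u)=\int_{t_n-u}^{t-u}f^\prime(r){\rm d}r$ and invoking Potter's uniform bounds for $f^\prime\in{\rm RV}_{\alpha-1}$ (together with the identity $[f(t-u)-f(t_n-u)]^2\leq 2 f(t)[f(t-u)-f(t_n-u)]$ to convert second moments to first moments of $f$). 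This is the point where regular variation of $f^\prime$, and not just of $f$, is used essentially; Borell's Gaussian concentration then converts these variance bounds into the required pathwise supremum estimate. Once this interpolation is in hand, everything else is routine.
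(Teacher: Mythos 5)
Your argument is correct in substance, but it takes a route that differs from the paper's in one essential respect. The paper first performs a \emph{deterministic} reduction: writing $\int_0^t B(t-x)f'(x)\,{\rm d}x/f(t)$ as $\alpha t^{-\alpha}\int_0^t B(t-x)x^{\alpha-1}{\rm d}x$ plus a remainder, it shows via the uniform convergence theorem for regularly varying functions and the LIL bound on $\sup_{u\le t}|B(u)|$ that the remainder is $o((t\log\log t)^{1/2})$ a.s., and thereby reduces everything to the exactly self-similar Riemann--Liouville integral $\int_0^t(t-x)^\alpha{\rm d}B(x)$; for that object the upper half of the LIL is quoted from (a mimicked version of) Lachal, and only the lower half is proved, by exactly the geometric-block/two-Borel--Cantelli scheme you use ($X_n$ versus $Y_n$ at times $\theta^n$, second Borel--Cantelli for the independent block integrals, first Borel--Cantelli for the history term, $\theta\to\infty$), followed by symmetry and path continuity for the full cluster set. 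You instead keep the general $f$ inside the Wiener integral $\tilde I(t)=\int_0^t f(t-u){\rm d}B(u)$, get the variance asymptotics from Karamata, and run the whole argument (upper and lower bound) for general $f$. This buys a self-contained proof that never invokes Lachal and avoids the pathwise approximation step, at the price that the upper bound's interpolation estimate $\sup_{t\in[t_n,t_{n+1}]}|\tilde I(t)-\tilde I(t_n)|$ must be done by hand for a process that is only asymptotically self-similar; this is precisely the step the paper sidesteps by its reduction. Your sketch of it is viable but incomplete as stated: Borell's inequality needs, besides the variance bound you indicate, a bound on the expectation (or median) of the block supremum that is $o(\phi(t_n))$ uniformly in $n$ with a constant degenerating as $\theta\downarrow1$ --- obtainable, e.g., by rescaling $t=t_n\tau$ and a Dudley entropy estimate made uniform in $n$ via Potter bounds --- and your pointwise inequality $[f(t-u)-f(t_n-u)]^2\le 2f(t)[f(t-u)-f(t_n-u)]$ presumes monotonicity of $f$, which only holds eventually (since $f'$ is merely eventually positive); both are repairable technicalities, but they should be written out, since for you this step carries the weight that the citation of Lachal carries in the paper. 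Your Lamperti/stationary-Gaussian remark for the pure power case is a nice independent confirmation of the constant, though it is not needed once the block argument is in place.
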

\begin{proof}
We shall use a decomposition
\begin{multline*}
\frac{\int_0^t B(t-x)f^\prime(x){\rm d}x}{f(t)}=\frac{\alpha}{t^\alpha}\int_0^t B(t-x)x^{\alpha-1}{\rm d}x+\int_0^t B(t-x)\Big(\frac{f^\prime(x)}{f(t)}-\frac{\alpha x^{\alpha-1}}{t^\alpha}\Big){\rm d}x. 
\end{multline*}
Denote the second integral on the right-hand side by $R_2(t)$. We first show that
\begin{equation}\label{eq:inter1}
|R_2(t)|=o((t\log\log t)^{1/2}),\quad t\to\infty\quad\text{a.s.}
\end{equation}
To this end, write
\begin{equation}\label{eq:second}
|R_2(t)|\leq \int_0^t |B(t-x)|\Big|\frac{f^\prime(x)}{f(t)}-\frac{\alpha x^{\alpha-1}}{t^\alpha}\Big|{\rm d}x\leq \sup_{u\in [0,\,t]}|B(u)| \int_0^1 \Big|\frac{tf^\prime(ty)}{f(t)}-\alpha y^{\alpha-1}\Big| {\rm d}y.
\end{equation}
By the LIL for Brownian motion, $$\sup_{u\in [0,\,t]}|B(u)|=O((t\log\log t)^{1/2}),\quad t\to\infty\quad\text{a.s.}$$
Thus, it is enough to show that the integral on the right-hand side of \eqref{eq:second} vanishes as $t\to\infty$. Fix any $\varepsilon\in (0,1)$. Since $f^\prime$ is regularly varying of index $\alpha-1$, we infer $$\lim_{t\to\infty}\sup_{u\in[\varepsilon,\,1]}|f^\prime(tu)/f^\prime(t)-u^{\alpha-1}|=0$$ by the uniform convergence theorem for regularly varying functions (Theorem 1.5.2 in \cite{BGT}) and also $\lim_{t\to\infty}(tf^\prime(t)/f(t))=\alpha$. Regular variation of $f^\prime$ of index $\alpha-1$ entails regular variation of $f$ of index $\alpha$. Hence, $\lim_{t\to\infty}\sup_{u\in [\varepsilon,\,1]}\big|tf^\prime(tu)/f(t)-\alpha u^{\alpha-1}\big|=0$ and thereupon $$\lim_{t\to\infty}\int_\varepsilon^1 \Big|\frac{tf^\prime(ty)}{f(t)}-\alpha y^{\alpha-1}\Big| {\rm d}y=0.$$ Finally, $$\int_0^\varepsilon \Big|\frac{tf^\prime(ty)}{f(t)}-\alpha y^{\alpha-1}\Big|{\rm d}y\leq \int_0^\varepsilon \Big(\frac{tf^\prime(ty)}{f(t)}+\alpha y^{\alpha-1}\Big){\rm d}y\leq \frac{f(t\varepsilon)}{f(t)}+\varepsilon^\alpha~\to~2\varepsilon^\alpha,\quad t\to\infty.$$ Sending $\varepsilon \to 0+$ completes the proof of \eqref{eq:inter1}.

In view of $\alpha\int_0^t B(t-x)x^{\alpha-1}{\rm d}x=\int_0^t (t-x)^\alpha{\rm d}B(x)$ for $t>0$, it remains to prove that
\begin{equation}\label{eq:inter2}
C\bigg(\bigg(\frac{\int_0^t (t-x)^\alpha{\rm d}B(x)}{(2(2\alpha+1)^{-1}t^{2\alpha+1}\log\log t)^{1/2}}: t>\eee\bigg)\bigg)=[-1,1]\quad\text{{\rm a.s.}}
\end{equation}
The latter integral is understood as a Skorokhod integral. It is well-defined as a consequence of $\int_0^t (t-x)^{2\alpha}{\rm d}x<\infty$. Assume that we can prove that
\begin{equation}\label{eq:inter3}
{\lim\sup}_{t\to\infty}\frac{\int_0^t (t-x)^\alpha{\rm d}B(x)}{(t^{2\alpha+1}\log\log t)^{1/2}}=\Big(\frac{2}{2\alpha+1}\Big)^{1/2}\quad\text{a.s.}
\end{equation}
Since $-B$ is also a Brownian motion, the latter entails $${\lim\inf}_{t\to\infty}\frac{\int_0^t (t-x)^\alpha{\rm d}B(x)}{(t^{2\alpha+1}\log\log t)^{1/2}}=-\Big(\frac{2}{2\alpha+1}\Big)^{1/2}\quad\text{a.s.}$$ and thereupon \eqref{eq:inter2} because the random function $t\mapsto (t^{2\alpha+1}\log\log t)^{-1/2} \int_0^t (t-x)^\alpha{\rm d}B(x)$ is a.s.\ continuous on $(\eee,\infty)$.

\noindent {\sc Proof of \eqref{eq:inter3}.} Assume that $\alpha\in\mn$. In this case relation \eqref{eq:inter3} is proved in Theorem 1 of \cite{Lachal:1997}. Thus, assume in what follows that $\alpha\notin \mn$. The proof of the fact that the upper limit does not exceed the right-hand side of \eqref{eq:inter3} mimics\footnote{In the cited article the proof is only given in the situation that $t\to 0+$. The case where $t\to\infty$ requires an obvious modification.} the proof of Theorem 1 in \cite{Lachal:1997}.

Let ${\rm Normal}\,(0,1)$ denote a random variable with the standard normal distribution. Left with showing that the upper limit is not smaller than the right-hand side of \eqref{eq:inter3}, we pick a $\theta>1$ and note that, for each $j\in\mn$, the random variable $\int_{\theta^{j-1}}^{\theta^j}(\theta^j-x)^\alpha{\rm d}B(x)$ has the same distribution as $$\Big(\int_{1/\theta}^1 (1-x)^{2\alpha}{\rm d}x\Big)^{1/2}\theta^{j(\alpha+1/2)}{\rm Normal}\,(0,1)=\Big(\frac{(1-1/\theta)^{2\alpha+1}}{2\alpha+1}\Big)^{1/2}\theta^{j(\alpha+1/2)}{\rm Normal}\,(0,1).$$ Furthermore, for different positive integer $j$ these random variables are independent. Thus, if we can prove that
\begin{equation}\label{eq:inter5}
\sum_{j\geq 1}\mmp(T_j)=\infty,
\end{equation}
where $$T_j:=\Big\{\int_{\theta^{j-1}}^{\theta^j}(\theta^j-x)^\alpha{\rm d}B(x)\geq \Big(\frac{2(1-1/\theta)^{2\alpha+1}}{2\alpha+1}\Big)^{1/2}\theta^{j(\alpha+1/2)}(\log\log \theta^j)^{1/2}\Big\},\quad j\in\mn,$$ then, by the converse part of the Borel-Cantelli lemma, $\mmp\{T_j~\text{i.o.}\}=1$. Using $$\int_x^\infty \eee^{-y^2/2}{\rm d}y~\sim~\frac{\eee^{-x^2/2}}{x},\quad x\to\infty,$$ we infer $$\mmp(T_j)=\mmp\{{\rm Normal}\,(0,1)\geq (2\log\log \theta^j)^{1/2}\}~\sim~\frac{1}{2\pi^{1/2}\log\theta}\frac{1}{j(\log j)^{1/2}},\quad j\to\infty.$$ This proves \eqref{eq:inter5}.

Next, we show that
\begin{equation}\label{eq:inter6}
{\lim\sup}_{j\to\infty}\frac{\int_0^{\theta^{j-1}}(\theta^j-x)^\alpha{\rm d}B(x)}{(\theta^{j(2\alpha+1)}\log\log \theta^j)^{1/2}}\leq 2\Big(\frac{2(1-(1-1/\theta)^{2\alpha+1})}{2\alpha+1}\Big)^{1/2}\quad\text{a.s.}
\end{equation}
For each $j\in\mn$, the random variable $\int_0^{\theta^{j-1}}(\theta^j-x)^\alpha{\rm d}B(x)$ has the same distribution as $$\Big(\frac{(1-1/\theta)^{2\alpha+1}}{2\alpha+1}\Big)^{1/2}\theta^{j(\alpha+1/2)}{\rm Normal}\,(0,1).$$ Put $$Q_j:=\Big\{\int_0^{\theta^{j-1}}(\theta^j-x)^\alpha{\rm d}B(x)>2\Big(\frac{2(1-(1-1/\theta)^{2\alpha+1})}{2\alpha+1}\Big)^{1/2}\theta^{j(\alpha+1/2)}(\log\log \theta^j)^{1/2} \Big\}.$$ Since $$\mmp(Q_j)=\mmp\{{\rm Normal}\,(0,1)>2(2\log\log \theta^j)^{1/2}\}~\sim~\frac{1}{4\pi^{1/2}(\log \theta)^4}\frac{1}{j^4 (\log j)^{1/2}},\quad j\to\infty,$$ we conclude that $\sum_{j\geq 1}\mmp(Q_j)<\infty$. Now \eqref{eq:inter6} follows by the direct part of the Borel-Cantelli lemma.

Since $-B$ has the same distribution as $B$, relation \eqref{eq:inter6} entails that a.s., for $j$ large enough, $$\int_0^{\theta^{j-1}}(\theta^j-x)^\alpha{\rm d}B(x)\geq -3\Big(\frac{2(1-(1-1/\theta)^{2\alpha+1})}{2\alpha+1}\Big)^{1/2} (\theta^{j(2\alpha+1)}\log\log \theta^j)^{1/2}.$$ This in combination with $\mmp\{T_j~\text{i.o.}\}=1$ ensures that a.s. $$\frac{\int_0^{\theta^j}(\theta^j-x)^\alpha{\rm d}B(x)}{(\theta^{j(2\alpha+1)}\log\log \theta^j)^{1/2}}\geq \Big(\frac{2}{2\alpha+1}\Big)^{1/2} \big((1-1/\theta)^{\alpha+1/2}-3 (1-(1-1/\theta)^{2\alpha+1})^{1/2}\big)$$ for infinitely many $j$. Hence, for each fixed $\theta>1$, $${\lim\sup}_{t\to\infty}\frac{\int_0^t(t-x)^\alpha{\rm d}B(x)}{(t^{2\alpha+1}\log\log t)^{1/2}}\geq \Big(\frac{2}{2\alpha+1}\Big)^{1/2} \big((1-1/\theta)^{\alpha+1/2}-3 (1-(1-1/\theta)^{2\alpha+1})^{1/2}\big) \quad\text{a.s.}$$ Since the second factor on the right-hand side approaches $1$ from below as $\theta\to\infty$, we arrive at $${\lim\sup}_{t\to\infty}\frac{\int_0^t(t-x)^\alpha{\rm d}B(x)}{(t^{2\alpha+1}\log\log t)^{1/2}}\geq \Big(\frac{2}{2\alpha+1}\Big)^{1/2} 
 \quad\text{a.s.}$$ thereby completing the proof.
\end{proof}

\section{Proof of Theorem \ref{thm:main}}\label{sect:main}

Denote by $(S_k)_{k\geq 1}$ a standard random walk with independent increments having the exponential
distribution of unit mean. Put $\pi(t):= \#\{k\in\mn: S_k\leq t\}$ for $t\geq 0$, so that $\pi:= (\pi(t))_{t\geq 0}$ is a Poisson process on $[0,+\infty)$ of unit intensity. It is assumed that $\pi$ is independent of both $S$ and the infinite sample $E_1$, $E_2,\ldots$

For each $t\geq 0$, denote by $\mathcal{K}(t)$ the number of gaps that contain at least one element of the Poissonized sample $E_1,\ldots, E_{\pi(t)}$. Recalling the notation $\varphi(t)=\Phi(\eee^t)$ put, for $t\in\mr$,
\begin{multline}\label{co}
A(t):=\int_0^\infty\varphi(t-S(v)){\rm d}v=\int_{[0,\,\infty)}\varphi(t-x){\rm d}S^\leftarrow(x)\\=\int_{[0,\,t]}\varphi(t-x){\rm d}S^\leftarrow(x)
+\int_{(t,\,\infty)}\varphi(t-x){\rm d}S^\leftarrow(x)=:A_1(t)+A_2(t).
\end{multline}

The subsequent proof will be divided into three steps. The purpose of the first step is to show that the large time a.s.\ asymptotic behavior of $\mathcal{K}(\eee^t)$ is driven by that of $A_1(t)$.

\noindent {\sc Step 1}. We intend to prove that
\begin{equation}\label{eq:appr1}
\lim_{t\to\infty}\frac{\mathcal{K}(\eee^t)-A_1(t)}{t^{1/2}\varphi(t)}=0\quad\text{a.s.}
\end{equation}

By Lemma 6.4 in \cite{Gnedin+Pitman+Yor:2006}, $\mathcal{K}(\eee^t)-A(t)$ is the terminal value of a square integrable martingale $(M_t(u))_{u\in [0,\infty]}$ say, with right-continuous paths and unit jumps. Furthermore, the variable $A(t)$ is the terminal value of the quadratic predictable characteristics of $(M_t(u))_{u\in [0,\infty]}$. For each $u>0$ and each $r>0$, $\sum_{0<s\leq u}(M_t(u)-M_t(u-))^{2r}=M_t(u)$. Hence, the terminal value of a compensator for $\Big(\sum_{0<s\leq u}(M_t(u)-M_t(u-))^{2r}\Big)_{u\in[0,\infty]}$ is $A(t)$. With this at hand, an application of Corollary 2.4 in \cite{Hernandez+Jacka:2022} yields, for each $r\geq 1$ and appropriate positive constant $C_r$, $$\me [|\mathcal{K}(\eee^t)-A(t)|^{2r}]\leq C_r(\me [(A(t))^r]+\me [A(t)]).$$ Now we have to find out what is the order of growth of the right-hand side. Using monotonicity of $\varphi$ we obtain $(A_1(t))^r=\Big(\int_{[0,\,t]}\varphi(t-x){\rm d}S^\leftarrow(x)\Big)^r\leq (\varphi(t))^r (S^\leftarrow(t))^r$. Hence, by Lemma \ref{lem:mom}, $\me [(A_1(t))^r]=O((t\varphi(t))^r$ as $t\to\infty$.

The inequality $\int_{(0,\infty)}\min(x,1)\nu({\rm d}x)<\infty$ which holds true for any L\'{e}vy measure $\nu$ is equivalent to $\Phi^\prime(0)<\infty$. This ensures that the function $\varphi$ is Lebesgue integrable on $(-\infty,0]$. In particular, $\sum_{n\leq 0}\varphi(n)<\infty$. Further, invoking monotonicity of $\varphi$ yields $$\varphi(t)\leq \sum_{n\leq 0}\varphi(n)\1_{[n-1,\, n)}(t),\quad t<0,$$ whence
\begin{multline}
(A_2(t))^r\leq \Big(\int_{(t,\infty)}\sum_{n\leq 0}\varphi(n)\1_{[n-1,\, n)}(t-x){\rm d}S^\leftarrow(x)\Big)^r\\\leq \Big( \sum_{n\geq 0}\varphi(n)(S^\leftarrow(t-n+1)-S^\leftarrow(t-n))\Big)^r\\=\Big(\sum_{j\leq 0}\varphi(j)\Big)^r\Big( \sum_{n\geq 0}\frac{\varphi(n)}{\sum_{j\leq 0}\varphi(j)}(S^\leftarrow(t-n+1)-S^\leftarrow(t-n))\Big)^r\\\leq \Big(\sum_{j\leq 0}\varphi(j)\Big)^r \sum_{n\geq 0}\frac{\varphi(n)}{\sum_{j\leq 0}\varphi(j)}(S^\leftarrow(t-n+1)-S^\leftarrow(t-n))^r. \label{eq:inter78}
\end{multline}
Here, the last inequality is justified by convexity of $x\mapsto x^r$ on $[0,\infty)$. By the strong Markov property of $S$ and Lemma \ref{lem:mom}(a), $\me[(S^\leftarrow(t-n+1)-S^\leftarrow(t-n))^r]\leq \me [(S^\leftarrow(1))^r]<\infty$. Passing now to expectations in \eqref{eq:inter78} we conclude that $$\me[(A_2(t))^r]\leq \Big(\sum_{j\leq 0}\varphi(j)\Big)^r \me [(S^\leftarrow(1))^r]<\infty.$$ Combining fragments together we arrive at $\me [(\mathcal{K}(\eee^t)-A(t))^{2r}]=O((t\varphi(t))^r)$ as $t\to\infty$ and more importantly $$\me [(\mathcal{K}(\eee^t)-A_1(t))^{2r}]=O((t\varphi(t))^r),\quad t\to\infty.$$ Pick now $r\geq 1$ satisfying $r\beta>1$. Then, by Markov's inequality and the direct part of the Borel-Cantelli lemma, $$\lim_{n\to\infty}\frac{\mathcal{K}(\eee^n)-A_1(n)}{n^{1/2}\varphi(n)}=0\quad\text{a.s.}$$ In view of $$\lim_{n\to\infty}\frac{\mathcal{K}(\eee^{n+1})-A_1(n+1)}{n^{1/2}\varphi(n)}=0\quad\text{a.s.}$$ and a.s.\ monotonicity of both $t\mapsto \mathcal{K}(\eee^t)$ and $A_1$, it is enough to show that $$\lim_{n\to\infty}\frac{A_1(n+1)-A_1(n)}{n^{1/2}\varphi(n)}=0\quad\text{a.s.}$$ To prove this, write
\begin{multline*}
A_1(n+1)-A_1(n)=\int_{[0,\,n]}(\varphi(n+1-x)-\varphi(n-x)){\rm d}S^\leftarrow(x)\\+\int_{(n,\,n+1]}\varphi(n+1-x){\rm d}S^\leftarrow(x):=I_n+J_n.
\end{multline*}
By monotonicity of $\varphi$, $J_n\leq \varphi(1)(S^\leftarrow(n+1)-S^\leftarrow(n))$ a.s. Hence, by Lemma \ref{lem:mom}(c), $$\lim_{n\to\infty}\frac{J_n}{n^{1/2}\varphi(n)}=0\quad\text{a.s.}$$ Since $\Phi^\prime$ is a nonincreasing function, so is $x\mapsto \eee^{-x}\varphi^\prime(x)$. This in combination with the mean value theorem for differentiable functions enables us to conclude that, for each $x\in [0,n]$ and some $\theta_{n,x}\in [n-x, n+1-x]$, $$\varphi(n+1-x)-\varphi(n-x)=(\eee^{-\theta_{n,x}}\varphi^\prime(\theta_{n,x}))\eee^{\theta_{n,x}}\leq (\eee^{-(n-x)}\varphi^\prime(n-x))\eee^{n+1-x}=\eee \varphi^\prime(n-x).$$ As a consequence, $I_n\leq \eee \int_{[0,\,n]}\varphi^\prime(n-x){\rm d}S^\leftarrow(x)$. Assume that $\beta\geq 1$. Then $$I_n\leq \eee S^\leftarrow(n)\sup_{y\in [0,\,n]}\varphi^\prime(y)~\sim~ \eee \mu^{-1}n \varphi^\prime(n),\quad n\to\infty\quad\text{a.s.}$$ Here, the last relation follows from the strong law of large numbers for $S^\leftarrow$ and Theorem 1.5.3 in \cite{BGT}. Since
\begin{equation}\label{eq:inter23}
\lim_{n\to\infty}\frac{n\varphi^\prime(n)}{\varphi(n)}=\beta,
\end{equation}
we infer
\begin{equation}\label{eq:inter24}
\lim_{n\to\infty}\frac{I_n}{n^{1/2}\varphi(n)}=0\quad\text{a.s.}
\end{equation}
Assume now that $\beta\in (0,1]$. By Potter's bound (Theorem 1.5.6 (ii) in \cite{BGT}), given positive $B$ and $\delta\in (0,\beta)$, there exists $n_0\in\mn$ such that $$\frac{\varphi^\prime(n(1-x)}{\varphi^\prime(n)}\leq B(1-x)^{\beta-1-\delta}$$ whenever $n\geq n_0+1$ and $x\in [0, 1-n_0/n]$. Write
\begin{multline*}
\int_{[0,\,n]}\varphi^\prime(n-x){\rm d}S^\leftarrow(x)=\int_{[0,\,1-n/n_0]}\varphi^\prime(n(1-x)){\rm d}S^\leftarrow(nx)+\int_{(n-n_0,\,n]}\varphi^\prime(n-x){\rm d}S^\leftarrow(x)\\=:I_{n,1}+I_{n,2}.
\end{multline*}
In view of $I_{n,2}\leq (S^\leftarrow(n)-S^\leftarrow(n-n_0))\sup_{y\in [0,\,n_0]}\varphi^\prime(y)$, an application of Lemma \ref{lem:mom}(c) yields $$\lim_{n\to\infty}\frac{I_{n,2}}{n^{1/2}\varphi(n)}=0\quad\text{a.s.}$$ Put $\rho:=-(\beta-1-\delta)$ and note that $\rho\in (0,1)$. We proceed by estimating $I_{n,1}$: $$\frac{I_{n,1}}{n\varphi^\prime(n)}\leq B\int_{[0,\,1-n_0/n]}(1-x)^{-\rho}{\rm d}_x\Big(\frac{S^\leftarrow(nx)}{n}-\frac{x}{\mu}\Big)+\frac{B}{\mu}\int_0^1(1-x)^{-\rho}{\rm d}x.$$ The last summand is equal to $B(\mu(1-\rho))^{-1}$. Integration by parts demonstrates that the absolute value of the integral in the first summand does not exceed $$\Big(\frac{n}{n_0}\Big)^\rho \frac{|S^\leftarrow(n-n_0)-\mu^{-1}(n-n_0)|}{n}+\rho\int_0^{1-n_0/n}\frac{|S^\leftarrow(nx)-\mu^{-1}nx|}{n}(1-x)^{-\rho-1}{\rm d}x.$$ By the LIL for $S^\leftarrow$ given in formula \eqref{eq:lil}, the first summand is $O(n^{\rho-1/2}(\log\log n)^{1/2})=o(n^{1/2})$ as $n\to\infty$ a.s. The second summand is bounded from above by
\begin{multline*}
\frac{\sup_{y\in [0,\,n]}|S^\leftarrow(y)-\mu^{-1}y|}{n}\rho\int_0^{1-n_0/n}(1-x)^{-\rho-1}{\rm d}x\leq \frac{\sup_{y\in [0,\,n]}|S^\leftarrow(y)-\mu^{-1}y|}{n}\Big(\frac{n}{n_0}\Big)^\rho\\=O(n^{\rho-1/2}(\log\log n)^{1/2})=o(n^{1/2}),\quad n\to\infty\quad\text{a.s.}
\end{multline*}
Recalling \eqref{eq:inter23} we infer $$\lim_{n\to\infty}\frac{I_{n,1}}{n^{1/2}\varphi(n)}=0\quad\text{a.s.}$$ and thereupon \eqref{eq:inter24}. The proof of \eqref{eq:appr1} is complete.

At the second step we provide an appropriate a.s.\ approximation of $A_1(t)-\int_0^t \varphi(x){\rm d}x$ by the Lebesgue convolution of a Brownian motion and $\varphi^\prime$. This in combination with Proposition \ref{prop:lil} and the conclusion of Step 1 enables us to prove LILs for $A_1(t)-\mu^{-1}\int_0^t \varphi(x){\rm d}x$ and $\mathcal{K}(\eee^t)-\mu^{-1}\int_0^t \varphi(x){\rm d}x$.

\noindent {\sc Step 2}. Integrating by parts we obtain
$$A_1(t)-\mu^{-1}\int_0^t\varphi(y){\rm d}y=\varphi(0)(S^\leftarrow(t)-\mu^{-1}t)+\int_0^t (S^\leftarrow(t-x)-\mu^{-1}(t-x))\varphi^\prime(x){\rm d}x.$$
Let $W$ be a standard Brownian motion as given in Lemma \ref{chs}. Write
\begin{multline*}
\int_0^t (S^\leftarrow(t-x)-\mu^{-1}(t-x))\varphi^\prime(x){\rm d}x=\int_0^t (S^\leftarrow(t-x)-\mu^{-1}(t-x)-\sigma\mu^{-3/2}W(t-x))\varphi^\prime(x){\rm d}x\\+\sigma\mu^{-3/2}\int_0^t W(t-x)\varphi^\prime(x){\rm
d}x\bigg)=: D_1(t)+\sigma\mu^{-3/2}D_2(t).
\end{multline*}
According to formulas \eqref{eq:appr} and \eqref{eq:lil} in Lemma \ref{chs}, respectively,
\begin{multline*}
|D_1(t)|\leq\sup_{0\leq u\leq t}|S^\leftarrow(u)-\mu^{-1}u-\sigma\mu^{-3/2}W(u)|\varphi(t)\\=o\big((t\log\log t)^{1/2}\varphi(t)\big),\quad t\to\infty\quad\text{a.s.}
\end{multline*}
and $$|S^\leftarrow(t)-\mu^{-1}t|=o((t\log\log t)^{1/2}\varphi(t)),\quad t\to\infty\quad\text{a.s.}$$ By Lemma \ref{lem:haan}, the function $\varphi^\prime$ is regularly varying at $\infty$ of index $\beta-1$. With this at hand, an application of Proposition \ref{prop:lil} yields
$$C\bigg(\bigg(\frac{\int_0^t W(t-x)\varphi^\prime(x){\rm d}x}{(2(2\beta+1)^{-1}t\log\log t)^{1/2}\varphi(t)}: t>\eee\bigg)\bigg)=[-1,1]\quad\text{{\rm a.s.}}$$ In view of Step 1, replacing $\eee^t$ with $t$ we conclude that
\begin{equation}\label{eq:inter7}
C\bigg(\bigg(\frac{\mathcal{K}(t)-\mu^{-1}\int_1^t x^{-1}\Phi(x){\rm d}x}{(2\sigma^2\mu^{-3}(2\beta+1)^{-1}\log t\log\log\log t)^{1/2}\Phi(t)}: t>\eee^\eee\bigg)\bigg)=[-1,1]\quad\text{{\rm a.s.}}
\end{equation}

At the last step we have to pass from the model with the sample $E_1,\ldots, E_{\pi(t)}$ to the original model with the sample $E_1,\ldots, E_n$.

\noindent {\sc Step 3}. The basic observations are $\mathcal{K}(S_n)=K_n$ a.s. and $K_{\pi(t)}=\mathcal{K}(t)$ a.s. Put
\begin{equation*}\label{eq:inter7}
\mathcal{D}:=C\bigg(\bigg(\frac{\mathcal{K}(S_n)-\mu^{-1}\int_1^{S_n} x^{-1}\Phi(x){\rm d}x}{(2\sigma^2\mu^{-3}(2\beta+1)^{-1}\log S_n\log\log\log S_n)^{1/2}\Phi(S_n)}: n~\text{large enough}\bigg)\bigg). 
\end{equation*}
In view of \eqref{eq:inter7}, $\mathcal{D} \subseteq [-1,1]$ a.s. 
Since
\begin{equation}\label{eq:inter8}
\text{the function}~~x\mapsto (\log x\log\log\log x)^{1/2}\Phi(x)~~\text{is slowly varying}
\end{equation}
and, by the strong law of large numbers for random walks, $\lim_{n\to\infty}(S_n/n)=1$ a.s., we infer $$\lim_{n\to\infty}\frac{(\log S_n\log\log\log S_n)^{1/2}\Phi(S_n)}{(\log n\log\log\log n)^{1/2}\Phi(n)}=1\quad\text{a.s.}$$ Further,
\begin{equation}\label{eq:inter9}
\Big|\int_1^{S_n}x^{-1}\Phi(x){\rm d}x-\int_1^n x^{-1}\Phi(x){\rm d}x\Big|~\sim~n^{-1}\Phi(n)|S_n-n|~\to~0,\quad n\to\infty.
\end{equation}
Here, the asymptotic equivalence is secured by the mean value theorem for integrals, the fact that the convergence $\lim_{t\to\infty}(\Phi(tx)/\Phi(t))=1$ is locally uniform in $x$ and the already mentioned strong law of large numbers for random walks. The convergence to $0$ is guaranteed by the LIL for standard random walks. Summarizing, we have proved that
\begin{equation}\label{eq:inter10}
C\bigg(\bigg(\frac{K_n-\mu^{-1}\int_1^n x^{-1}\Phi(x){\rm d}x}{(2\sigma^2\mu^{-3}(2\beta+1)^{-1}\log n \log\log\log n)^{1/2}\Phi(n)}: n~\text{large enough}\bigg)\bigg)=\mathcal{D}.
\end{equation}
On the other hand, \begin{equation*}
C\bigg(\bigg(\frac{K_{\pi(t)}-\mu^{-1}\int_1^{\pi(t)} x^{-1}\Phi(x){\rm d}x}{(2\sigma^2\mu^{-3}(2\beta+1)^{-1}\log \pi(t)\log\log\log \pi(t))^{1/2}\Phi(\pi(t))}: t~~\text{large enough}\bigg)\bigg)\subseteq \mathcal{D}.
\end{equation*}
Using $\lim_{t\to\infty}(\pi(t)/t)=1$ a.s.\ in combination with \eqref{eq:inter8} yields $$\lim_{t\to\infty}\frac{(\log \pi(t)\log\log\log \pi(t))^{1/2}\Phi(\pi(t))}{(\log t\log\log\log t)^{1/2}\Phi(t)}=1\quad\text{a.s.}$$ Similarly to \eqref{eq:inter9}, we obtain $$\Big|\int_1^{\pi(t)}x^{-1}\Phi(x){\rm d}x-\int_1^t x^{-1}\Phi(x){\rm d}x\Big|~\sim~t^{-1}\Phi(t)|\pi(t)-t|~\to~0,\quad t\to\infty$$ having utilized $|\pi(t)-t|=O((t\log\log t)^{1/2})$ as $t\to\infty$ a.s. The latter follows from the LIL for renewal processes, see, for instance, Proposition 3.5 in \cite{Iksanov+Jedidi+Bouzzefour:2017}. Thus, $[-1,1]\subseteq \mathcal{D}$ a.s. and thereupon  $\mathcal{D}=[-1,1]$ a.s. Taking into account \eqref{eq:inter10} completes the proof.

\section{Proof of Corollary \ref{cor:appl}}\label{sect:appl}

(a) Assume that we can check that
\begin{equation}\label{eq:inter35}
\Phi(t)=\theta (\log t+\gamma-\log \lambda)+o(1),\quad t\to\infty,
\end{equation}
where $\gamma$ is the Euler-Mascheroni constant. Then condition \eqref{eq:Haan} holds with $\beta=1$ and $\ell(t)=\theta$ for $t>0$. Further,
$$\mu=\theta\int_0^\infty \eee^{-\lambda x}{\rm d}x=\theta\lambda^{-1}<\infty\quad\text{and}\quad \sigma^2=\theta\int_0^\infty x \eee^{-\lambda x}{\rm d}x=\theta\lambda^{-2}\in (0,\infty).$$ Thus, Theorem \ref{thm:main} applies. Relation \eqref{eq:inter35} ensures that $$\int_1^n \frac{\Phi(y)}{y}{\rm d}y=\frac{\theta(\log n)^2}{2}+\theta(\gamma-\log \lambda)\log n+o(\log n),\quad n\to\infty.$$ This demonstrates that $(\lambda/2)(\log n)^2$ is the correct centering.

We shall need an integral representation for $\Psi$ the logarithmic derivative of the gamma function
\begin{equation}\label{eq:logder}
\Psi(s)=-\gamma+\int_0^1 y^{-1}(1-(1-y)^{s-1}){\rm d}y,\quad s>0.
\end{equation}
Now we prove \eqref{eq:inter35}. Changing the variable $y=1-\eee^{-x}$ we obtain
\begin{multline*}
\theta^{-1}\Phi(t)=\int_0^\infty (1-\exp(-t(1-\eee^{-x})))\frac{\eee^{-\lambda x}}{x}{\rm d}x=\int_0^1 (1-\exp(-ty))\frac{(1-y)^{\lambda-1}}{y}{\rm d}y\\+\int_0^1 (1-y)^{\lambda-1}\Big(\frac{1}{|\log(1-y)|}-\frac{1}{y}\Big){\rm d}y-\int_0^1 \exp(-ty)(1-y)^{\lambda-1}\Big(\frac{1}{|\log(1-y)|}-\frac{1}{y}\Big){\rm d}y\\:=A(t)+B-C(t).
\end{multline*}
Plainly, $\lim_{t\to\infty} C(t)=0$ by the monotone convergence theorem. Further, $$B=\int_0^1\frac{1-(1-y)^{\lambda-1}}{y}{\rm d}y-\int_0^1 \frac{1-(1-y)^{\lambda-1}}{|\log (1-y)|}{\rm d}y=\Psi(\lambda)+\gamma-\log \lambda.$$ By a change of variable the last integral transforms into a Frullani integral which is equal to $\log \lambda$. Finally, $$A(t)=\int_0^t \frac{1-\eee^{-y}}{y}{\rm d}y-\int_0^1\frac{1-(1-y)^{\lambda-1}}{y}{\rm d}y\\+\int_0^1\eee^{-ty} \frac{1-(1-y)^{\lambda-1}}{y}{\rm d}y.$$ The penultimate summand is equal to $-\gamma-\Psi(\lambda)$ and the last summand is $o(1)$ as $t\to\infty$ by the monotone convergence theorem. The first summand is equal to $$\log t+\int_0^1 \frac{1-\eee^{-y}}{y}{\rm d}y -\int_1^\infty \frac{\eee^{-y}}{y}{\rm d}y+\int_t^\infty \frac{\eee^{-y}}{y} {\rm d}y=\log t+\gamma+o(1),\quad t\to\infty.$$ We have shown that
\begin{equation}\label{eq:inter36}
A(t)=\int_0^1 (1-\exp(-ty))\frac{(1-y)^{\lambda-1}}{y}{\rm d}y=\log t-\Psi(\lambda)+o(1),\quad t\to\infty.
\end{equation}

Combining fragments together we obtain \eqref{eq:inter35}.

\noindent (b) According to \eqref{eq:inter36}
\begin{equation}\label{eq:inter34}
\Phi(t)=\theta \log t-\theta\Psi(\lambda)+o(1),\quad t\to\infty.
\end{equation}
This entails \eqref{eq:Haan} with $\beta=1$ and $\ell(t)=\theta$ for $t>0$. Differentiating \eqref{eq:logder} 
we infer $$\mu=\theta\int_0^\infty x \frac{\eee^{-\lambda x}}{1-\eee^{-x}}{\rm d}x=\theta\int_0^1 |\log(1-y)|\frac{(1-y)^{\lambda-1}}{y}{\rm d}y=\theta\Psi^\prime(\lambda)<\infty$$ and $$\sigma^2=\theta\int_0^1 (\log(1-y))^2\frac{(1-y)^{\lambda-1}}{y}{\rm d}y=-\theta\Psi^{\prime\prime}(\lambda)\in (0,\infty).$$ Hence, Theorem \ref{thm:main} applies. In view of \eqref{eq:inter34}, $$\int_1^n \frac{\Phi(y)}{y}{\rm d}y=\frac{\theta(\log n)^2}{2}-\theta\Psi(\lambda)\log n+o(\log n)=\frac{\theta(\log n)^2}{2}+o((\log n)^{3/2}),\quad n\to\infty.$$ This proves that $(2\Psi^\prime(\lambda))^{-1}(\log n)^2$ is the right centering.


The proof of Corollary \ref{cor:appl} is complete.

\vskip0.5cm
\noindent
{\bf Acknowledgement}. The work of Wissem Jedidi was supported by the Research Supporting Project (RSP2024R162), King Saud University, Riyadh, Saudi Arabia.

\end{document}